\documentclass[10pt]{article}

\usepackage[colorlinks,linkcolor=blue,citecolor=blue]{hyperref}
\usepackage{tabulary} 
\usepackage{amsthm}
\usepackage{multirow}
\usepackage{marginnote}
\usepackage{a4wide}
\usepackage{amssymb}
\usepackage{amsfonts}
\usepackage{amsmath}
\usepackage{mathrsfs}
\usepackage{tikz}
\usetikzlibrary{arrows,matrix}
\usetikzlibrary{positioning}
\usepackage{mdframed} 
\usepackage{lipsum} 
\usepackage{extarrows} 
\usepackage{color}
\usepackage{enumitem} 
\usepackage{tocloft} 
\usepackage{titlesec} 

\usepackage[all]{xy} 

\input xy
\xyoption{arrow} \xyoption{matrix}

\usepackage{amsmath,amssymb} 
\usepackage{amsthm} 
\usepackage{mathtools} 
\usepackage{geometry} 
\usepackage{tikz}
\usepackage{tikz-cd}

\date{}

\newtheorem{proposition}{Proposition}[section]
\newtheorem{theorem}[proposition]{Theorem}
\newtheorem{lemma}[proposition]{Lemma}
\newtheorem{example}[proposition]{Example}
\newtheorem{definition}[proposition]{Definition}
\newtheorem{corollary}[proposition]{Corollary}

\def\der{\partial }

\def\nFM0{{\nu }_{F,M_0}}
\def\nFN0{{\nu }_{F,N_0}}
\def\nGN0{{\nu }_{G,N_0}}

\def\N0{ {\bf N}_0 }

\def\t{\otimes}
\def\g{\gamma}

\def\ra{\rightarrow}

\def\Xpm{X^{\pm }}

\def\s{\sigma}

\def\l1{{\lambda}_1}

\def\a{\alpha}
\def\a0{ {\alpha }_0}
\def\a1{ {\alpha }_1}

\def\l{\lambda}

\def\nFGM0{{\nu }_{F,G,M_0}}

\def\nFN0{{\nu}_{F,N_0}}

\def\sm{{\sigma}^m}

\def\sm1{{\sigma}^{-1}}

\def\smtp1{{\sigma}^{-t+1}}

\def\S1{S^{-1}}

\def\Xpm1{X^{\pm 1}_1}

\def\sPM1{{\sigma }^{\pm 1}}
\def\sMP1{{\sigma }^{\mp 1 }}

\def\b{\beta}
\def\d{\delta}

\def\di{{\rm d.ind}}

\def\L{\Lambda}

\def\Ytm1{Y^{t-1}}
\def\Yim1{Y^{i-1}}

\def\bK{\overline{K}}

\def\ker{ {\rm ker } }

\def\gcd{ {\rm gcd } }

\def\SL2Z{ {\rm SL}_2({\bf Z}) }

\def\th{ \theta }

\def\Gp1{ G^{1 , 1 } }
\def\P11{ P^{-1 , 1 } }
\def\Pp1{ P^{1 , 1 } }

\def\th{\theta}

\def\nCLsr{{}^\nu\kern-2pt {\cal L}^{\sigma , \rho  }}
\def\nP{{}^\nu \kern-2pt P}
\def\nL{{}^\nu\kern-2pt L}
\def\nLL{{}^\nu\kern-2pt \Lambda}
\def\nPsr{{}^\nu\kern-2pt P^{\sigma , \rho  }}
\def\nLsr{{}^\nu\kern-2pt L^{\sigma , \rho  }}
\def\nuCL{{}^\nu\kern-2pt  {\cal L}}
\def\nCLsr{{}^\nu\kern-2pt {\cal L}^{\sigma , \rho  }}
\def\nCL1m{{}^\nu\kern-2pt {\cal L}^{-1 , 1  }}
\def\x1nu{x^\frac{1}{\nu}}
\def\xm1nu{x^{-\frac{1}{\nu}}}

\def\ra{\rightarrow }

\def\CB{{\cal B}}

\def\nAM0{{\nu }_{{\cal A},M_0}}
\def\nAN0{{\nu }_{{\cal A},N_0}}

\def\SL{{\rm SL}}

\def\di!{\frac{\der^i}{i!}}
\def\dik!{\frac{\der^k_i}{k!}}

\def\gl{\mathfrak{l}}

\def\N{\mathbb{N}}

\def\0{\overline{0}}
\def\1{\overline{1}}

\def\Ln1{\L_{n,\overline{1}}}

\def\a1{a_{\overline{1}}}

\def\S{\Sigma}

\def\vn1{\overrightarrow{n-1}}

\def\gl{{\rm gl}}
\def\sl{{\rm sl}}

\def\F{\mathbb{F}}

\def\Inn{{\rm Inn}}

\def\mJ{\mathbb{J}}
\def\mI{\mathbb{I}}

\def\mF{\mathbb{F}}

\def\K1{{\rm K}_1}

\def\hmI1{\widehat{\mI_1}}
\def\tmI1{\widetilde{\mI_1}}
\def\tmJ1{\widetilde{\mJ_1}}
\def\hB1{\widehat{B_1}}
\def\hCB1{\widehat{\CB_1}}

\def \S{\mathcal{S}}

\def\CB{{\cal B}}

\def\hB{\hat{B}}

\def\sl2{\mathfrak{sl}_2}

\def\Irr{{\rm Irr}}

\def\sl2{\mathfrak{sl}_2}
\def\gl2{\mathfrak{gl}_2}

\def\res{{\rm res}}

\def\Irr{{\rm Irr}}
\def\b1{\overline{1}}

\def\res{{\rm res}}

\def\gl{{\mathfrak{l}}}

\makeatletter
\newenvironment{proof*}[1][\proofname]{\par
  \pushQED{\qed}%
  \normalfont \partopsep=\z@skip \topsep=\z@skip
  \trivlist
  \item[\hskip\labelsep
        \itshape
    #1\@addpunct{.}]\ignorespaces
}{%
  \popQED\endtrivlist\@endpefalse
}
\makeatother

\begin{document}

\author{V. V. \  Bavula 
}
\title{Explicit descriptions of the subfields 
$(NL)^{pi}$ and $(NL)^{pi}(NL)^{sep}$ of  $NL$ and new explicit  criteria for $NL = (NL)^{pi}(NL)^{sep}$}

\maketitle

\begin{abstract}
Let $L=K(\theta)\simeq K[x]/f(x)$ be a simple field extension  in prime characteristic $p>0$, $L^{sep}$ and $L^{pi}$ be the maximal separable and purely inseparable subfields of $L$, respectively. 
Let $N/K$ be a purely inseparable field extension. For the field extensions $L/K$ and $NL/N$, the aim of the paper is  to give explicit descriptions of the following subfields and their degrees in terms of the coefficients of the polynomial $f$ and two numerical field invariants $m_f$ and $m_{f,N}$: $L^{pi}$, $L^{pi}L^{sep}$, $(NL)^{pi}$ and  $(NL)^{pi}(NL)^{sep}$.  From these results, we derive new explicit criteria for $L=L^{pi}L^{sep}$ and $NL=(NL)^{pi}(NL)^{sep}$.\\


{\bf Key words:} {\em finite field extension, purely inseparable field extension, compositum, maximal purely inseparable subfield, maximal  separable subfield, degree, minimal polynomial, field invariant.}\\

{\bf  Mathematics subject classification 2020:} 12F05, 12F10, 12F15.

{ \small \tableofcontents}

\end{abstract}


\section{Introduction}\label{INTRO}

The following notation is fixed (unless it is stated otherwise): $K$ is a field of prime characteristic $p>0$, $\bK$ is the algebraic closure of $K$,  $K[x]$ is a polynomial $K$-algebra in a variable $x$, $\Irr_m(K[x])$ is the set of monic irreducible polynomials over the field $K$ and  $L/K$ is a finite  field extension. If, in addition,  
 $L/K$ is a simple finite  field extension then  $L=K(\th)=K[x]/(f(x))$ where $f(x)\in \Irr_m(K[x])$ and   
$$
f(x)=f^{sep}(x^{p^{n}})=\sum_{i=0}^s\l_ix^{ip^n}\in \Irr_m(K[x])\;\;  (\l_i\in K\;\; {\rm  and}\;\;  \l_s=1)
$$
 is its separable presentation (see (\ref{f=fsepxn})), $L^{pi}$  and $L^{sep}$ are maximal purely inseparable and separable subfields of the field extension $L/K$, respectively. For algebraic  field extensions $A/K$ and $B/K$, we denote  by $AB$ their compositum in $\bK$. There is a natural $K$-algebra epimorphism $A\t B\ra AB$, $a\t b\mapsto ab$ which is not an isomorphism, in general, where $\t = \t_K$. If $A/K$ is a purely inseparable field extension and $B/K$ is a separable field extension then $A\t B\simeq AB$. In general, for a  field extension $L/K$, the subfield $L^{pi}L^{sep}\simeq L^{pi}\t L^{sep}$ of $L$ is a {\em proper}  subfield.  All missing definitions in the paper  are standard  and can be found, say  in \cite{Lang-Algebra}.\\

{\bf Explicit descriptions of the subfields $L^{pi}$, $L^{sep}$ and $L^{pi}L^{sep}$ of a simple field extension $L/K$.} For a simple field extension $L/K$, Theorem \ref{27Apr26} gives explicit descriptions of the  subfields $L^{pi}/K$, $L^{sep}/K$ and $L^{pi}L^{sep}/K$ in terms of the coefficients of the polynomial $f$ and its inseparability degree. Notice that the description of $L^{sep}/K$ is a well-known result. 
 Theorem \ref{27Apr26} gives also explicit numerical values for the degrees  $[L^{pi}:K]$, $[L^{sep}:K]$, $[L^{pi}L^{sep}:L^{pi}]$ and   $[L^{pi}L^{sep}:L^{sep}]$  and $[L:L^{pi}\t L^{sep}]$ where  a numerical invariant $m_f$ (Definition \ref{DEF=mf}) plays a key role.  The number $m_f$ is defined via the coefficients of the polynomial $f$. It turns out that it is a field invariant for $L/K$, see 
 Theorem \ref{27Apr26}.(2).  It also reveals the reason why, in general,  the field $L^{pi}L^{sep}$ is a {\em proper} subfield of $L$.  Theorem \ref{27Apr26} yields several new   criteria for $L=L^{pi}L^{sep} (=L^{pi}\t L^{sep})$, see Theorem \ref{B27Apr26} and  Theorem \ref{A7May26}.\\
\[
\begin{tikzpicture}[scale=1.8, every node/.style={font=\normalsize}]

\node (L) at (0,2.6) {$L=K(\th)$};
\node (A) at (0,0) {$K$};
\node (B) at (-1.2,0.9) {$L^{pi}=K\bigg(\l_0^\frac{1}{p^{m_f}}, \ldots ,  \l_{s-1}^\frac{1}{p^{m_f}}\bigg)$};
\node (D) at (1.2,0.9) {$L^{sep}=K\Big(\th^{p^n}\Big)$};
\node (C) at (0,1.7) {$L^{pi}L^{sep} \;=\; L^{pi}\!\otimes L^{sep}=L^{pi}\Big(\th^{p^{n-m_f}}\Big)$};

\draw (L) -- node[right] {$p^{\,n-{m_f}}$} (C);

\draw (C) -- node[right] {$p^{{m_f}}$} (D);
\draw (D) -- node[right] {$\deg(f^{sep})$} (A);
\draw (A) -- node[left]  {$p^{{m_f}}$} (B);
\draw (B) -- node[left]  {$\deg(f^{sep})$} (C);

\end{tikzpicture}
\]

{\bf (Theorem \ref{27Apr26})} {\em  Suppose that $K$ is a field of prime characteristic $p>0$, $L/K$ is a simple finite  field extension and $L=K(\th)=K[x]/(f(x))$ where $f(x)=f^{sep}(x^{p^{n}})=\sum_{i=0}^s\l_ix^{ip^n}\in \Irr_m(K[x])$, $\l_i\in K$, $\l_s=1$ and $m:=m_f$. Then: 
\begin{enumerate}

\item $L^{sep}=K(\th^{p^n})\simeq K[x]/(f^{sep}(x))$, $[L^{sep}:K]=\deg(f^{sep}(x))=s$ and $f^{sep}(x)\in \Irr_m(K[x])$ is the minimal polynomial of the element $\th^n$ over the field $K$.

\item  $L^{pi}=K\Big(\l_0^\frac{1}{p^m}, \ldots ,  \l_{s-1}^\frac{1}{p^m}\Big)$,   $[L^{pi}:K]=p^m$ and $m=\max\Big\{m'=0,1,\ldots , n \, \Big| \, \th^{p^{n-m'}}\in L^{pi}L^{sep} \Big\}=\max\Big\{m'=0,1,\ldots , n \, \Big| \, L^{p^{n-m'}}\subseteq  L^{pi}L^{sep} \Big\}$. In particular, the number $m$ is an isomorphism invariant of the field extension $L/K$.

\item $L^{pi}L^{sep}=L^{pi}\t L^{sep}=L^{pi}(\th^{p^{n-m}})\simeq L^{pi}[x]/(f^{sep\frac{1}{p^m}})$, $[L^{pi}L^{sep}:L^{pi}]= s$,  $[L^{pi}L^{sep}:L^{sep}]= p^m$  
and the polynomial $f^{sep\frac{1}{p^m}}:=\sum_{i=0}^s\l_i^\frac{1}{p^m}x^i\in \Irr_m(L^{pi}[x])$ is the minimal polynomial of the element $\th^{p^{n-m}}$ over the field $L^{pi}$.

\item $L=L^{pi}\t L^{sep}(\th)=L^{pi}\t L^{sep}[x]/\Big(x^{p^{n-m}}- \th^{p^{n-m}}\Big)$,  $[L:L^{pi}\t L^{sep}]= p^{n-m}$ and $x^{p^{n-m}}- \th^{p^{n-m}}\in \Irr_m(L^{pi}\t L^{sep}[x])$ is the minimal polynomial of the element $\th$ over the field $L^{pi}\t L^{sep}$. The finite field extension $L/L^{pi}\t L^{sep}$ is a simple purely inseparable field extension of exponent $n-m$. 

\item $(L/L^{pi})^{sep}=L^{pi}L^{sep}/L^{pi}$.

\end{enumerate}
}

Lemma \ref{a17May26} provides two different methods for determining the invariant $m_f$.\\

{\bf Explicit descriptions of the subfields $(NL)^{pi}$ and $(NL)^{pi}(NL)^{sep}$  of $NL/N$ where $L/K$ is a simple field extension and $N/K$ is a purely inseparable field extension.}
For a simple field extension $L/K=K(\th)/K$ and a purely inseparable field extension $N/K$ (not necessarily finite),
 Theorem \ref{A6May26} describes the structure of the compositum $NL$, the degree $[NL:N]$ and the minimal polynomial for the simple field extension $NL/N$. \\

{\bf (Theorem \ref{A6May26})}  
{\em Suppose that $K$ is a field of prime characteristic $p>0$, $L/K$ is a simple finite  field extension and $L=K(\th)=K[x]/(f(x))$ where $f(x)=f^{sep}(x^{p^{n}})=\sum_{i=0}^s\l_ix^{ip^n}\in \Irr_m(K[x])$, $\l_i\in K$, $\l_s=1$ and  $N/K$ is a purely inseparable field extension. Then: 
\begin{enumerate}

\item $f_N=\sum_{i=0}^s\l_i^\frac{1}{p^{m_{f,N}}}x^{ip^{n-m_{f,N}}}\in \Irr_m(N[x])$, $f_N\in \Irr_m (M_{f,N}[x])$, $f= f_N^{p^{m_{f,N}}}$,   $\deg (f_N)=sp^{n-m_{f,N}}$ where $M_{f,N}=K\bigg(\l_0^\frac{1}{p^{m_{f,N}}}, \ldots ,  \l_{s-1}^\frac{1}{p^{m_{f,N}}}\bigg)$.

\item $N(\th)\simeq N[x]/(f_N)$ and  $[N(\th):N]=\deg (f_N)=sp^{n-m_{f,N}}$.


\end{enumerate}
}

 Theorem \ref{VB-27Apr26} yields explicit descriptions of the following subfields of  $NL/N$ --  $(NL)^{pi}/N$, $(NL)^{sep}/N$ and their  compositum $(NL)^{pi}(NL)^{sep}/N$ --  in terms of the coefficients of the minimal  polynomial $f$ of the element  $\th\in L$ over $K$ and two natural numbers (that are field invariants) $m_{f,N}$ and $m_{f,N(\th)}$ associated with  $f$,  $N$ and $NL$ (Definition \ref{Def=mfN} and Definition \ref{DEF=mfNth}). We compute explicit numerical values for the following degrees:  $[(NL)^{pi}:N]$, $[(NL)^{sep}:N]$, $[(NL)^{pi}(NL)^{sep}:(NL)^{pi}]$,  $[(NL)^{pi}(NL)^{sep}:(NL)^{sep}]$  and $[L:(NL)^{pi}(NL)^{sep}]$.   \\


{\bf (Theorem \ref{VB-27Apr26})} 
{\em Suppose that $K$ is a field of prime characteristic $p>0$, $L/K$ is a simple finite  field extension and $L=K(\th)=K[x]/(f(x))$ where $f(x)=f^{sep}(x^{p^{n}})=\sum_{i=0}^s\l_ix^{ip^n}\in \Irr_m(K[x])$, $\l_i\in K$, $\l_s=1$   and $N/K$ is a purely inseparable field extension. Then (below $(NL)^{sep}:=(NL/N)^{sep}$ and $(NL)^{pi}:=(NL/N)^{pi}$): 
\begin{enumerate}

\item $(NL/N)^{sep}=NL^{sep}=N(\th^{p^{n-m_{f,N}}})\simeq N[x]/(f^{sep}_N)$, $[NL^{sep}:N]=\deg(f^{sep}_N(x))=s$ and $f^{sep}_N(x)\in \Irr_m(N[x])$ is the minimal polynomial of the element $\th^{n-m_{f,N}}$ over the field $N$.

\item  $(NL/N)^{pi}=N\bigg( 
\l_0^\frac{1}{p^{m_{f,N}+m_{f,N(\th)}}}, \ldots ,  \l_{s-1}^\frac{1}{p^{m_{f,N}+m_{f,N(\th)}}}\bigg)\supseteq NL^{pi}=N\bigg( 
\l_0^\frac{1}{p^{m_{f,N}}}, \ldots ,  \l_{s-1}^\frac{1}{p^{m_{f,N}}}\bigg)$ and
\begin{eqnarray*}
[(NL)^{pi}:N]&=& p^{m_{f,N(\th)}},\\
 m_{f,N(\th)}&=&\max\Big\{m'=0,1,\ldots , n-m_{f,N} \, | \, \th^{p^{n-m_{f,N}-m'}}\in (NL)^{pi}(NL)^{sep} \Big\}\\
 &=&\max\Big\{m'=0,1,\ldots , n-m_{f,N}  \, | \, (NL)^{p^{n-m_{f,N} -m'}}\subseteq  (NL)^{pi}(NL)^{sep} \Big\}.
\end{eqnarray*}
  In particular, the number $m_{f,N(\th)}$ is an isomorphism invariant of the field extension $NL/N$.

\item $(NL)^{pi}(NL)^{sep}=(NL)^{pi}\t_N (NL)^{sep}=(NL)^{pi}\Big(\th^{p^{n-m_{f,N}-m_{f,N(\th)}}}\Big)\simeq (NL)^{pi}[x]/(f^{sep}_{NL})$, 
\begin{eqnarray*}
[(NL)^{pi}(NL)^{sep}:(NL)^{pi}] &=& s,\;\;  [(NL)^{pi}(NL)^{sep}:(NL)^{sep}] = p^{m_{f,N(\th)}},\\
f^{sep}_{NL} &=& \sum_{i=0}^s\l_i^\frac{1}{p^{m_{f,N}+m_{f, N(\th)}}}x^i\in \Irr_m((NL/N)^{pi}[x])
\end{eqnarray*}
is the minimal polynomial of the element $\th^{p^{n-m_{f,N}-m_{f,N(\th)}}}$ over the field $(NL)^{pi}$. 

\item $NL=(NL)^{pi}\t_N (NL)^{sep}(\th)=(NL)^{pi}\t_N (NL)^{sep}[x]\bigg/\bigg(x^{p^{n-m_{f,N}-m_{f,N(\th)}}}- \th^{p^{n-m_{f,N}-m_{f,N(\th)}}}\bigg)$,  
\begin{eqnarray*}
[NL:(NL)^{pi}\t_N (NL)^{sep}]&=& p^{n-m_{f,N}-m_{f,N(\th)}},\\
 x^{p^{n-m_{f,N}-m_{f,N(\th)}}}- \th^{p^{n-m_{f,N}-m_{f,N(\th)}}}&\in & \Irr_m\Big((NL)^{pi}\t_N (NL)^{sep}[x]\Big)
\end{eqnarray*}
is the minimal polynomial of the element $\th$ over the field $(NL)^{pi}\t_N (NL)^{sep}$. The finite field extension $L/L^{pi}\t_N L^{sep}$ is a simple purely inseparable field extension of exponent $n-m_{f,N}-m_{f,N(\th)}$. 

\item $\Big(NL/(NL)^{pi}\Big)^{sep}=(NL)^{pi}(NL)^{sep}/(NL)^{pi}$.

\end{enumerate}
}

 Corollary  \ref{Na7May26} is  an explicit criterion for $(NL)^{pi}=N$. Theorem \ref{NB27Apr26} is an explicit criteria for $NL=(NL)^{pi}(NL)^{sep}$.\\

{\bf New Criteria for $L=L^{pi}L^{sep}$.}
 In the literature, there are several  criteria for $L=L^{pi}L^{sep}$, see Theorem \ref{C27Apr26} for detail:

\begin{itemize}

\item \textbf{(The Degree Criterion)} $[L:K] = [L^{pi}:K] \cdot [L^{sep}:K]$. 

\item \textbf{(Separability over the Purely Inseparable Part)} {\em The extension $L/L^{pi}$ is a separable field  extension.}
 
\item   \textbf{(Equality of the Inseparable Degree)}  $ [L^{pi}:K] = [L:K]_i $   {\em where $[L:K]_i$ denotes the inseparable degree of} $L/K$.
\end{itemize}

  Each finite field extension $L/K$  is the compositum $L=L_1\cdots L_\nu$ of simple finite field extensions $L_i=K(\th_i)\simeq K[x]/(f_i)$  where $f_i(x)=f_i^{sep}(x^{p^{n_i}})=\sum_{j=0}^{s_i}\l_{ij}x^{jp^{n_i}}\in \Irr_m(K[x])$ is the minimal polynomial of the element $\th_i$ over $K$ and $\deg (f_i)=s_ip^{n_i}$. Theorem \ref{A7May26} is a new explicit criterion  for $L = L^{pi}L^{sep}$ which is given in terms of the coefficients $\l_{ij}$ and the numbers $s_i$ and $n_i$.\\


{\bf (Theorem  \ref{A7May26})}  
{\em Suppose that $L/K$ is a finite field extension of prime characteristic $p>0$ which  is the compositum $L=L_1\cdots L_\nu$ of       simple field extensions $L_i=K(\th_i)\simeq K[x]/(f_i)$, $i=1, \ldots , \nu$ where $f_i(x)=f_i^{sep}(x^{p^{n_i}})=\sum_{j=0}^{s_i}\l_{ij}x^{jp^{n_i}}\in \Irr_m(K[x])$ and $\deg (f_i)=s_ip^{n_i}$.  Then the following statements are equivalent:

\begin{enumerate}

\item $L = L^{pi}L^{sep}$.

\item $\l_{ij}^\frac{1}{p^{n_i}}\in L^{pi}$ for $i=1, \ldots , \nu$ and $j=0,1, \ldots , s_i-1$. 

\item $L^{pi}= K\bigg(\l_{ij}^\frac{1}{p^{n_i}}\bigg| i=1, \ldots , \nu; j=0,1, \ldots , s_i-1\bigg)$.
     
\item $L^{pi}\supseteq K\bigg(\l_{ij}^\frac{1}{p^{n_i}}\bigg| i=1, \ldots , \nu; j=0,1, \ldots , s_i-1\bigg)$.

\end{enumerate}
}


\section{Explicit descriptions of the subfields $L^{pi}$ and $L^{pi}L^{sep}$  of a simple finite field extension $L/K$}\label{L=LpiLsep}

In this section, for a simple field extension $L/K$, Theorem \ref{27Apr26} yields explicit descriptions of the maximal purely inseparable and separable  subfields of $L/K$,  $L^{pi}/K$ and $L^{sep}/K$, and their  compositum $L^{pi}L^{sep}/K$ in terms of the coefficients of the polynomial $f$ and its inseparability degree. We compute explicit numerical values for the following degrees:  $[L^{pi}:K]$, $[L^{sep}:K]$, $[L^{pi}L^{sep}:L^{pi}]$,  $[L^{pi}L^{sep}:L^{sep}]$  and $[L:L^{pi}\t L^{sep}]$. 
 It further explains why the compositum $L^{pi}L^{sep}$ is typically strictly contained in 
$L$. For a simple field extension $L/K$, Corollary  \ref{a7May26} is  an explicit criterion for $L^{pi}=K$.\\

{\bf The equality $L^{pi} L^{sep}=L^{pi}\t L^{sep}$.} The equality $L^{pi} L^{sep}=L^{pi}\t L^{sep}$ is known result. 
We give an alternative, Galois-theoretic proof of this fact. We use this fact often in the paper.

\begin{theorem}\label{A27Apr26}
Suppose that $K$ is a field of prime characteristic $p>0$ and  $L/K$ is a field extension. Then  $L^{pi} L^{sep}=L^{pi}\t L^{sep}$.
\end{theorem}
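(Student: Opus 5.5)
The goal is to show that the natural $K$-algebra epimorphism $\mu: L^{pi}\t L^{sep}\ra L^{pi}L^{sep}$, $a\t b\mapsto ab$, is injective. If $L/K$ is not algebraic, replace $L$ by its algebraic closure in $L$; this changes neither $L^{pi}$ nor $L^{sep}$, so we may assume $L/K$ algebraic. Since a tensor product of vector spaces is the directed union of $A\t B$ over finite-dimensional subspaces $A\subseteq L^{pi}$, $B\subseteq L^{sep}$, it suffices to prove injectivity for finite subextensions. So fix finite subextensions $P\subseteq L^{pi}$ and $S\subseteq L^{sep}$. By the primitive element theorem, $S=K(\a)$ with $\a$ separable over $K$, and we show $P\t S\ra PS$ is injective.

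Let $g\in \Irr_m(K[x])$ be the minimal polynomial of $\a$, so $S\simeq K[x]/(g)$ and $P\t S\simeq P[x]/(g)$. The main step is to show that $g$ remains irreducible over $P$. Then $P[x]/(g)$ is a field, and $\mu$ restricted to it is a nonzero ring homomorphism from a field, hence injective.

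I expect the irreducibility of $g$ over $P$ to be the key (and only nontrivial) point, and I would prove it Galois-theoretically. Let $\a=\a_1,\ldots,\a_d$ be the distinct roots of $g$ in $\bK$, and let $E$ be a normal closure of $S/K$; then $E/K$ is Galois. Take any $\s\in {\rm Gal}(E/K)$ with $\s(\a)=\a_j$. Extend $\s$ to a $K$-automorphism $\tilde\s$ of $\bK$. Every $c\in P$ satisfies $c^{p^e}\in K$ for some $e$, so $c$ is the unique root of $x^{p^e}-c^{p^e}$, and therefore $\tilde\s(c)=c$. Thus $\tilde\s$ fixes $P$ pointwise and restricts to a $P$-automorphism of $PE$ sending $\a$ to $\a_j$. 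Hence all $\a_j$ are conjugate over $P$, so the minimal polynomial of $\a$ over $P$ has all $d$ roots $\a_j$, and it equals $g$.

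An equivalent route avoids Galois theory: $[P(\a):P]$ is at most $\deg g$, and it is also at least $\deg g$. For the lower bound, the minimal polynomial $h$ of $\a$ over $P$ divides $g$, and some power $p^e$ of it has coefficients in $K$, since $P^{p^e}\subseteq K$. Then $h^{p^e}(x)=\prod(x-\b_i)^{p^e}$ lies in $K[x]$ and vanishes at $\a$, so $g$ divides $h^{p^e}$. Because $g$ is separable, every root of $g$ is a root of $h$, giving $h=g$. Either version completes the proof.
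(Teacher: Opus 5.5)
Your proof is correct, but it is organized differently from the paper's. The paper reduces to finite $L^{pi}$ and $L^{sep}$ and enlarges $L^{sep}$ to its normal closure, so that it is Galois with a primitive element $\theta$. It then claims that restriction $G(L^{pi}L^{sep}/L^{pi})\to G(L^{sep}/K)$ is a bijection and deduces $[L^{pi}L^{sep}:L^{pi}]=[L^{sep}:K]$. It finishes by a dimension count: the epimorphism $L^{pi}\otimes L^{sep}\to L^{pi}L^{sep}$ is between $K$-spaces of the same dimension $[L^{pi}:K][L^{sep}:K]$.

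You instead show that the minimal polynomial $g$ of a primitive element of $S$ stays irreducible over $P$. Then $P\otimes S\cong P[x]/(g)$ is already a field, and injectivity is automatic. Your normal closure $E$ only serves to produce $\sigma$ and never has to lie inside $L$, and no dimension count is needed.

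Your Galois route rests on the same fact as the paper's, but you actually justify it: every $K$-automorphism of $\overline{K}$ fixes $P$ pointwise, because $c$ is the only root of $x^{p^e}-c^{p^e}$. The paper merely says each automorphism of $L^{sep}/K$ extends ``by trivial action on the elements of $L^{pi}$''. Read literally as $\sum a_ib_i\mapsto\sum a_i\sigma(b_i)$, the well-definedness of that map is essentially the injectivity being proved. Your remark is exactly what makes that step rigorous.

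Your second, Galois-free route is in substance the paper's later Lemma \ref{a9May26}: an irreducible separable polynomial stays irreducible over a purely inseparable extension. The paper proves that lemma via elementary symmetric functions of the roots and $\overline{K}^{sep}\cap N=K$. Your divisibility argument ($g\mid h^{p^e}$, so every root of $g$ is a root of $h$) is shorter, and it shows that Theorem \ref{A27Apr26} follows directly from that lemma.

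The paper's version gives the identification $G(L^{pi}L^{sep}/L^{pi})\cong G(L^{sep}/K)$ as a by-product. Yours gives the explicit presentation $PS\cong P[x]/(g)$ and the degree equality $[PS:P]=[S:K]$.
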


\begin{proof} Clearly, $L^{pi}\cap L^{sep}=K$ and there is a $K$-epimorphism  $\pi : L^{pi}\t L^{sep}\ra L^{pi} L^{sep}$, $a\t b\mapsto ab$. We haver to show that $\ker (\pi) = \{0\}$.  Suppose that $\ker (\pi)  \neq  \{0\}$.  We seek a contradiction.  Then there is a nonzero element $\alpha=\sum_{i=1}^na_i\t b_i \in \ker (\pi)$ where $a_i\in  L^{pi}$ and $b_i\in  L^{sep}$. Let $A=K(a_1, \ldots , a_n)$ and $B=K(b_1, \ldots , b_n)$. Then    $\alpha \in A\t B$, the extension $A/K$ is a finite purely inseparable finite field extension and  the extension $B/K$ is  a separable finite field extension. Since $A\t B\subseteq  L^{pi}\t L^{sep}$, we may assume that 
$$
A =L^{pi}\;\; {\rm  and}\;\; B=L^{sep}.
$$
 Let $L^{nor}$ be the normal closure of $L^{sep}$ in $\bK$. Similarly, since $ L^{pi}\t L^{sep}\subseteq  L^{pi}\t L^{nor}$, we may assume that $L^{sep}=L^{nor}$, i.e. the finite field extension $L^{sep}$ is a Galois field  extension with Galois group $G(L^{sep}/K)$.   By the Primitive Element Theorem, $L^{sep}=K(\th )$ is a simple field extension where $\th \in L^{sep}$.  Recall that if $f(x)\in K[x]$ is the minimal polynomial of the element $\th$ over $K$ then  $$
 f(x)=\prod_{g\in G(L/K)}(x-g(\th))
 $$ 
 and $g(\th)\in L^{sep}$ for all $g\in G(L^{sep}/K)$. So, every automorphism $h\in G(L^{sep}/K)$ permutes the roots $\{ g(\th)\, | \, g\in G(L^{sep}/K)\}$ of the polynomial $f(x)$. 
  Since the field extension $L^{sep}/K$ is Galois, the field extension $L^{pi}L^{sep}/L^{pi}=L^{pi}(\th)/L^{pi}$ is also a Galois finite  field extension such that the restriction map  
  $$ \res: G(L^{pi}L^{sep}/L^{pi})\ra G(L^{sep}/K), \;\; \s\mapsto \s|_{L^{sep}
}  $$
  is a bijection (every automorphism of $L^{sep}/K$ is necessarily uniquely extended to an automorphism of $L^{pi}L^{sep}/L^{pi}$ by trivial action on the elements of $L^{pi}$). In particular,
  $$ 
  [L^{pi}L^{sep}:L^{pi}]=|G(L^{pi}L^{sep}/L^{pi})|=|G(L^{sep}/K)|=[L^{sep}:K].
  $$ 
It follows from $K\subseteq L^{pi}\subseteq L^{pi}L^{sep}=L^{pi}(\th)$ that 
$$
[L^{pi}L^{sep}:K]=[L^{pi}L^{sep}:L^{pi}][L^{pi}:K]=[L^{sep}:K][L^{pi}:K]=[L^{pi}\t L^{sep}:K],
$$
and so $L^{pi} L^{sep}=L^{pi}\t L^{sep}$.
\end{proof}

{\bf Explicit descriptions of the subfields $L^{pi}$, $L^{sep}$ and $L^{pi}L^{sep}$ of a simple field extension $L/K$.}

\begin{definition}
Suppose that $K$ is a field of prime characteristic $p>0$. Then each non-scalar polynomial $f(x)\in K[x]$ admits a unique presentation 

\begin{equation}\label{f=fsepxn}
 f(x)=f^{sep}(x^{p^n})\;\; {\rm where}\;\; f^{sep}(x)\in K[x]\;\; {\rm is\; a\; separable\; polynomial\; and}\;\; n\geq 0.
\end{equation}
The equality (\ref{f=fsepxn}) is called a {\bf separable presentation} of the polynomial $f(x)$. The polynomial $f^{sep}(x)$ is called the {\bf separable part} of $f$ and the natural number $n$ is called the {\bf inseparability degree} of $f(x)$ and denoted by $\deg_{\rm ins}(f)$. 
\end{definition}

For the polynomial $f(x)=\sum_{i\geq 0} \mu_ix^i$,  ${\rm coef}(f):= \{\mu_i\, | \, i\geq 0\}$ is the 
  set of its coefficients. Clearly, 
\begin{equation}\label{f=fsepxn-2}
 {\rm coef}(f)={\rm coef}(f^{sep}).
\end{equation}
Notice that 
\begin{equation}\label{f=fsepxn-1}
\deg(f)=p^n\deg (f^{sep})\;\; {\rm where}\;\; n=\deg_{\rm ins}(f).
\end{equation}
For the polynomial $f(x)$ as in (\ref{f=fsepxn}),   $f(x)=\sum_{i\geq 0} \l_ix^{ip^n}=\bigg(\sum_{i\geq 0} \l_i^\frac{1}{p^n} x^i\bigg)^{p^n}$, where $n=\deg_{\rm ins}(f)$, and so 

\begin{equation}\label{f=fsepxn-3}
f(x)=\bigg( {f^{sep}}^\frac{1}{p^n}\bigg)^{p^n}\;\; {\rm where}\;\;  {f^{sep}}^\frac{1}{p^n}:=\sum_{i\geq 0} \l_i^\frac{1}{p^n} x^i\in K\Big({\rm coef}(f)^\frac{1}{p^n}\Big)[x]
\end{equation}
is a {\em separable} polynomial over the purely inseparable finite  field extension $K\Big({\rm coef}(f)^\frac{1}{p^n}\Big)/K$. Clearly,
\begin{equation}\label{f=fsepxn-4}
{\rm roots}(f)={\rm roots}( {f^{sep}}^\frac{1}{p^n}).
\end{equation}

Suppose that $L=K(\th)=K[x]/(f)$ is a simple field extension, where $\th \in L$,  and the polynomial  $f(x)=f^{sep}(x^{p^n})\in K[x]$ is the minimal polynomial of $\th$. The following concepts are fundamental to providing explicit descriptions of the fields 
$L^{pi}$ and $L^{sep}$.

\begin{definition}\label{DEF=mf}
\begin{eqnarray*}
m_f:=m_{f,L}:=m_{f,L/K}&:=&\max\bigg\{ m'=0,1,\ldots , n\, \bigg| \, \l_i^\frac{1}{p^{m'}}\in L\;\; \text{ for all}\;\;i=0, \ldots , s-1\bigg\}\\
&=&\max\bigg\{ m'=0,1,\ldots , n\, \bigg| \, \l_i^\frac{1}{p^{m'}}\in L^{pi}\;\; \text{ for all}\;\;i=0, \ldots , s-1\bigg\},\\
f^{sep\frac{1}{p^{m_f}}}(x)&:=&\sum_{i=0}^s\l_i^\frac{1}{p^{m_f}}x^i\in L^{pi}[x].
\end{eqnarray*}
\end{definition}
Theorem \ref{27Apr26}.(2) shows that the number $m_f$ is an isomorphism invariant of the field extension $L/K$. 
 There is a field diagram where the edges are labelled by the degrees of the corresponding field extensions, see   Theorem \ref{A27Apr26} and  Theorem \ref{27Apr26} for details: 




\begin{equation}\label{L=LLL}
\begin{tikzpicture}[scale=1.8, every node/.style={font=\normalsize}]

\node (L) at (0,2.6) {$L=K(\th)$};
\node (A) at (0,0) {$K$};
\node (B) at (-1.2,0.9) {$L^{pi}=K\bigg(\l_0^\frac{1}{p^{m_f}}, \ldots ,  \l_{s-1}^\frac{1}{p^{m_f}}\bigg)$};
\node (D) at (1.2,0.9) {$L^{sep}=K\Big(\th^{p^n}\Big)$};
\node (C) at (0,1.7) {$L^{pi}L^{sep} \;=\; L^{pi}\!\otimes L^{sep}=L^{pi}\Big(\th^{p^{n-m_f}}\Big)$};

\draw (L) -- node[right] {$p^{\,n-{m_f}}$} (C);

\draw (C) -- node[right] {$p^{{m_f}}$} (D);
\draw (D) -- node[right] {$\deg(f^{sep})$} (A);
\draw (A) -- node[left]  {$p^{{m_f}}$} (B);
\draw (B) -- node[left]  {$\deg(f^{sep})$} (C);

\end{tikzpicture}
\end{equation}

For a simple field extension $L/K$, Theorem \ref{27Apr26} gives explicit descriptions of the  subfields $L^{pi}/K$, $L^{sep}/K$ and $L^{pi}L^{sep}/K$ in terms of the coefficients of the polynomial $f$ and its inseparability degree. Theorem \ref{27Apr26} gives also explicit numerical values for the degrees  $[L^{pi}:K]$, $[L^{sep}:K]$, $[L^{pi}L^{sep}:L^{pi}]$ and   $[L^{pi}L^{sep}:L^{sep}]$  and $[L:L^{pi}\t L^{sep}]$. It also reveals the reason why, in general,  the field $L^{pi}L^{sep}$ is a {\em proper} subfield of $L$.  Theorem \ref{27Apr26} yields several new   criteria for $L=L^{pi}L^{sep} (=L^{pi}\t L^{sep})$, see Theorem \ref{B27Apr26} and  Theorem \ref{A7May26}.


\begin{theorem}\label{27Apr26}
Suppose that $K$ is a field of prime characteristic $p>0$, $L/K$ is a simple finite  field extension and $L=K(\th)=K[x]/(f(x))$ where $f(x)=f^{sep}(x^{p^{n}})=\sum_{i=0}^s\l_ix^{ip^n}\in \Irr_m(K[x])$, $\l_i\in K$, $\l_s=1$ and $m:=m_f$. Then: 
\begin{enumerate}

\item $L^{sep}=K(\th^{p^n})\simeq K[x]/(f^{sep}(x))$, $[L^{sep}:K]=\deg(f^{sep}(x))=s$ and $f^{sep}(x)\in \Irr_m(K[x])$ is the minimal polynomial of the element $\th^n$ over the field $K$.

\item  $L^{pi}=K\Big(\l_0^\frac{1}{p^m}, \ldots ,  \l_{s-1}^\frac{1}{p^m}\Big)$,   $[L^{pi}:K]=p^m$ and $m=\max\Big\{m'=0,1,\ldots , n \, \Big| \, \th^{p^{n-m'}}\in L^{pi}L^{sep} \Big\}=\max\Big\{m'=0,1,\ldots , n \, \Big| \, L^{p^{n-m'}}\subseteq  L^{pi}L^{sep} \Big\}$. In particular, the number $m$ is an isomorphism invariant of the field extension $L/K$.

\item $L^{pi}L^{sep}=L^{pi}\t L^{sep}=L^{pi}(\th^{p^{n-m}})\simeq L^{pi}[x]/(f^{sep\frac{1}{p^m}})$, $[L^{pi}L^{sep}:L^{pi}]= s$,  $[L^{pi}L^{sep}:L^{sep}]= p^m$  
and the polynomial $f^{sep\frac{1}{p^m}}:=\sum_{i=0}^s\l_i^\frac{1}{p^m}x^i\in \Irr_m(L^{pi}[x])$ is the minimal polynomial of the element $\th^{p^{n-m}}$ over the field $L^{pi}$.

\item $L=L^{pi}\t L^{sep}(\th)=L^{pi}\t L^{sep}[x]/\Big(x^{p^{n-m}}- \th^{p^{n-m}}\Big)$,  $[L:L^{pi}\t L^{sep}]= p^{n-m}$ and $x^{p^{n-m}}- \th^{p^{n-m}}\in \Irr_m(L^{pi}\t L^{sep}[x])$ is the minimal polynomial of the element $\th$ over the field $L^{pi}\t L^{sep}$. The finite field extension $L/L^{pi}\t L^{sep}$ is a simple purely inseparable field extension of exponent $n-m$. 

\item $(L/L^{pi})^{sep}=L^{pi}L^{sep}/L^{pi}$.

\end{enumerate}
\end{theorem}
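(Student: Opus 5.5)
We prove the five statements in the order (1), (3), (2), (4), (5), using only the tower law, Theorem \ref{A27Apr26}, and Definition \ref{DEF=mf}.

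\textbf{Step (1).} This is the standard description of $L^{sep}$. The element $\th^{p^n}$ is a root of the separable irreducible polynomial $f^{sep}$, so $K(\th^{p^n})/K$ is separable of degree $s$. Next, $\th$ is a root of $x^{p^n}-\th^{p^n}$ over $K(\th^{p^n})$, so $[L:K(\th^{p^n})]\le p^n$. Since $[L:K]=sp^n$, the tower law gives equality, and $L/K(\th^{p^n})$ is purely inseparable. Hence $K(\th^{p^n})$ is the maximal separable subfield, i.e.\ $K(\th^{p^n})=L^{sep}$.

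\textbf{Step (3).} Put $M:=K(\l_0^{1/p^m},\ldots,\l_{s-1}^{1/p^m})$. By Definition \ref{DEF=mf}, $M\subseteq L^{pi}$.
\begin{itemize}
\item Over $M$, the element $\th^{p^{n-m}}$ is a root of $g:=f^{sep\frac{1}{p^m}}$, since $g(\th^{p^{n-m}})^{p^m}=f^{sep}(\th^{p^n})=0$.
\item The polynomial $g$ is separable: its $p^m$-th power (coefficientwise) is $f^{sep}$, and Frobenius preserves discriminants up to $p^m$-th powers.
\item By Theorem \ref{A27Apr26}, $L^{pi}L^{sep}=L^{pi}\t L^{sep}$, so $[L^{pi}L^{sep}:L^{pi}]=[L^{sep}:K]=s$.
\item $L^{pi}(\th^{p^{n-m}})$ contains $\th^{p^n}$, hence contains $L^{pi}L^{sep}$.
\item Conversely, $\th^{p^{n-m}}$ is separable over $L^{pi}$ (being a root of $g$). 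It therefore lies in the maximal separable subextension of $L/L^{pi}$.
\end{itemize}
To show that this maximal separable subextension equals $L^{pi}L^{sep}$, I would argue via degrees. The extension $L/L^{pi}L^{sep}$ is purely inseparable because $L/L^{sep}$ is. An element separable over $L^{pi}$ and purely inseparable over $L^{pi}L^{sep}$ is separable over $L^{pi}L^{sep}$, hence lies in it. This also proves (5). So $\th^{p^{n-m}}\in L^{pi}L^{sep}$, $L^{pi}L^{sep}=L^{pi}(\th^{p^{n-m}})$, and $\deg g=s$ forces $g$ to be the minimal polynomial over $L^{pi}$, hence irreducible over $L^{pi}$ and a fortiori over $M$.

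\textbf{Step (2) — the main obstacle.} The crux is to show $[L^{pi}:K]=p^m$ and $L^{pi}=M$. Let $m'$ be maximal with $\th^{p^{n-m'}}\in L^{pi}L^{sep}$; we have $m'\ge m$. The minimal polynomial of $\th^{p^{n-m'}}$ over $L^{pi}$ is separable and divides a power of $f^{sep\frac{1}{p^{m'}}}$, which is separable once its coefficients are adjoined. Its roots are $\th_j^{p^{n-m'}}$, where $\th_j^{p^n}$ are the $s$ roots of $f^{sep}$. It has degree $s=[L^{pi}L^{sep}:L^{pi}]$, so it equals $f^{sep\frac{1}{p^{m'}}}$. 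Hence the $\l_i^{1/p^{m'}}$ lie in $L^{pi}$, and maximality of $m$ gives $m'=m$; this proves the characterization of $m$ in (2).

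For the degree, I would compare $[L:K]=sp^n$ with $[L:L^{pi}L^{sep}]\cdot s\cdot[L^{pi}:K]$. We need $[L:L^{pi}L^{sep}]=p^{n-m}$, i.e.\ that $x^{p^{n-m}}-\th^{p^{n-m}}$ is irreducible over $L^{pi}L^{sep}$; that is statement (4). A purely inseparable polynomial $x^{p^k}-a$ is irreducible iff $a\notin F^p$, and here $a=\th^{p^{n-m}}$, $F=L^{pi}L^{sep}$. If $a$ were a $p$-th power in $F$, then $\th^{p^{n-m-1}}\in F$ (since $p$-th roots are unique), contradicting the maximality of $m'=m$. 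Thus $[L:L^{pi}L^{sep}]=p^{n-m}$, so $[L^{pi}:K]=p^m$.

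Finally, $[M:K]\ge p^m$. Indeed, $L^{pi}L^{sep}\supseteq M(\th^{p^{n-m}})\supseteq L^{sep}$, and $M(\th^{p^{n-m}})$ has degree at most $s$ over $M$. Comparing degrees via $[M(\th^{p^{n-m}}):K]\ge [L^{pi}L^{sep}:K]=sp^m$ (which I would derive from $M(\th^{p^{n-m}})=L^{pi}L^{sep}$: the field contains $L^{sep}$ and $\th$ is purely inseparable of exponent $n-m$ over it) gives $M=L^{pi}$. The invariance of $m$ is then immediate from the intrinsic characterization.
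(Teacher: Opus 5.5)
Your proof is correct. It uses the same ingredients as the paper: Theorem \ref{A27Apr26} for $[L^{pi}L^{sep}:L^{pi}]=s$, the equality $(L/L^{pi})^{sep}=L^{pi}L^{sep}$, the minimal-polynomial comparison, and the degree argument giving $M(\th^{p^{n-m}})=L^{pi}L^{sep}$ and hence $M=L^{pi}$. It arranges them in a different order and proves one key inequality differently.

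The paper starts from the intrinsic number $m^*=\max\{m' \mid \th^{p^{n-m'}}\in L^{pi}L^{sep}\}$. It derives all the degrees, and $L^{pi}=K(\l_0^{1/p^{m^*}},\ldots,\l_{s-1}^{1/p^{m^*}})$, in terms of $m^*$, and proves $m^*=m_f$ only at the end. For $m_f\le m^*$ it shows $f^{sep\frac{1}{p^{m_f}}}$ is irreducible over $L^{pi}$ via the exponent bound $(L^{pi})^{p^{m^*}}\subseteq K$, which is available only once $L^{pi}$ has been identified. It then gets separability from irreducibility plus a nonzero derivative.

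You start from $m_f$ and get $m_f\le m^*$ at once. The polynomial $f^{sep\frac{1}{p^{m_f}}}$ has coefficients in $L^{pi}$ and distinct roots, since $f^{sep}$ does. So no irreducibility is needed to see that $\th^{p^{n-m_f}}$ is separable over $L^{pi}$.

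For $m^*\le m_f$ you follow the paper, but you state that the minimal polynomial of $\th^{p^{n-m^*}}$ over $L^{pi}$ is separable. That is what justifies identifying it with $f^{sep\frac{1}{p^{m^*}}}$. The paper goes straight from equality of degrees to this identification, for a polynomial known a priori only to lie in $\bK^{pi}[x]$.

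Your route is shorter and more transparent. The paper's route has the expository advantage of building everything around the invariant $m^*$ from the start.

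A few small steps should be written out:
\begin{itemize}
\item In your Step (2), $L^{pi}(\th^{p^{n-m'}})$ contains $\th^{p^n}$, so it equals $L^{pi}L^{sep}$ and has degree $s$ over $L^{pi}$.
\item The second description of $m$ in (2) follows from $L^{p^k}=K^{p^k}(\th^{p^k})$.
\item The precise reason for $M(\th^{p^{n-m}})=L^{pi}L^{sep}$ is that $\th^{p^{n-m}}\in M(\th^{p^{n-m}})$ gives $[L:M(\th^{p^{n-m}})]\le p^{n-m}=[L:L^{pi}L^{sep}]$.
\end{itemize}
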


\begin{proof}  1. By the definition, the  polynomial $f^{sep}\in K[x]$ is a separable polynomial such that $f^{sep}(\th^{p^n})=f(\th)=0$. Therefore, $K(\th^{p^n})\subseteq L^{sep}$. In particular, the field extension $L^{sep}/K(\th^{p^n})$ is a separable field extension.  In fact, the equality holds, 
$$
K(\th^{p^n})= L^{sep}.
$$
 This follows from the field inclusions $K\subseteq  K(\th^{p^n})\subseteq L^{sep}\subseteq L=K(\th)$ and the facts that field extension $L/K(\th^{p^n})=K(\th )/K(\th^{p^n})$ is a purely inseparable field extension and its subfield extension $L^{sep}/K(\th^{p^n})$ is a separable field extension. 

Since $f(x)=f^{sep}(x^{p^n})\in \Irr_m(K[x])$, we have that $f^{sep}(x)\in \Irr_m(K[x])$. Now, 
$$
L^{sep}=K(\th^{p^n})\simeq K[x]/(f^{sep}(x)). 
$$ 
Hence, $[L^{sep}:K]=\deg(f^{sep}(x))=s$ and $f^{sep}(x)\in \Irr_m(K[x])$ is the minimal polynomial of the element $\th^{p^n}$ over the field $K$.

2--4.  By statement 1,   $\th^{p^n}\in L^{sep}$. This implies that the field extension $L/L^{pi}L^{sep}=K(\th)/L^{pi}L^{sep}$ is a purely inseparable field extension. Now, the equality  
$$
m^*:=\max\Big\{m'\in \N \, | \, \th^{p^{n-m'}}\in L^{pi}L^{sep} \Big\}=\max\Big\{m'\in \N \, | \, L^{p^{n-m'}}\subseteq  L^{pi}L^{sep} \Big\}
$$ 
follows from the equality $L=K(\th)$. By the definition of the number $m^*$ and the equality $L^{sep}=K(\th^{p^n})$, 
$$
L^{pi}L^{sep}=L^{pi}(\th^{p^{n-m^*}}).
$$

(i) {\em $L=L^{pi}\t L^{sep}(\th)=L^{pi}\t L^{sep}[x]/\Big(x^{p^{n-m^*}}- \th^{p^{n-m^*}}\Big)$,  $[L:L^{pi}\t L^{sep}]= p^{n-m^*}$    and $x^{p^{n-m^*}}- \th^{p^{n-m^*}}\in \Irr_m(L^{pi}\t L^{sep}[x])$ is the minimal polynomial of the element $\th$ over the field $L^{pi}\t L^{sep}$. The finite field extension $L/L^{pi}\t L^{sep}$ is a simple purely inseparable field extension of exponent} $n-m^*$: The statement (i) follows from the definition of the number $m^*$.

(ii) $[L:L^{sep}]=p^n$ {\em and}  $[L^{pi}L^{sep}:L^{sep}]=p^{m^*}$: By statement 1, $[L^{sep}:K]=\deg (f^{sep}(x))=s$, and so 
$$
[L:L^{sep}]=\frac{[L:K]}{[L^{sep}:K]}=\frac{\deg (f)}{s}=\frac{sp^n}{s}=p^n.
$$
Now, the second  equality in the statement (ii) follows from the statement (i),
$$
[L^{pi}L^{sep}:L^{sep}]=\frac{[L:L^{sep}]}{[L:L^{pi}L^{sep}]}=\frac{p^n}{p^{n-m^*}}=p^{m^*}.
$$

(iii) $[L^{pi}L^{sep}:K]=sp^{m*}$ {\em and} $[L^{pi}:K]=p^{m^*}$: 
$$
[L^{pi}L^{sep}:K]= [L^{pi}L^{sep}:L^{sep}][L^{sep}:K]=p^{m*}s,
$$

$$
[L^{pi}:K] = \frac{[L^{pi}:K][L^{sep}:K]}{[L^{sep}:K]} 
 =  \frac{[L^{pi}\t L^{sep}:K]}{[L^{sep}:K]}=\frac{p^{m^*}s}{s}=p^{m^*}.
$$

(iv) $[L^{pi}L^{sep}:L^{pi}]=s$:

$$
[L^{pi}L^{sep}:L^{pi}]= \frac{[L^{pi}L^{sep}:K]}{[L^{pi}:K]}=\frac{sp^{m^*}}{p^{m^*}}  =s.
$$

Notice that $f^{sep,\frac{1}{p^{m^*}}}(x):=\sum_{i=0}^s\l_i^\frac{1}{p^{m^*}}x^i\in \bK^{pi}[x]$ where $\bK^{pi}/K$ is the maximal purely inseparable field extension in $\bK/K$ ($\bK$ is the algebraic closure of $K$). Then  the polynomial $f^{sep,\frac{1}{p^{m^*}}}(x)$ is a monic polynomial of degree $\deg\Big(f^{sep,\frac{1}{p^{m^*}}}(x)\Big)=s$ and the element $\th^{p^{n-m^*}}$ is a root of it:
$$
f^{sep,\frac{1}{p^{m^*}}}(\th^{p^{n-m^*}})=
\sum_{i=0}^s\l_i^\frac{1}{p^{m^*}}\th^{ip^{n-m^*}}=\bigg(\sum_{i=0}^s\l_i \th^{ip^n} \bigg)^\frac{1}{p^{m^*}}=\Big( f(\th)\Big)^\frac{1}{p^{m^*}}= 0^\frac{1}{p^{m^*}}  =0.
$$
 
By the statement (iv) and the equality $ L^{pi}(\th^{p^{n-m^*}})=L^{pi}L^{sep}$, 
$$
[L^{pi}(\th^{p^{n-m^*}}):L^{pi}]=[L^{pi}L^{sep}:L^{pi}]=s=\deg(f^{sep,\frac{1}{p^{m^*}}}).
$$
 This implies that the polynomial $f^{sep,\frac{1}{p^{m^*}}}(x)=\sum_{i=0}^s\l_i^\frac{1}{p^{m^*}}x^i\in L^{pi}[x]$ is the minimal polynomial of the element $\th^{p^{n-m^*}}\in L^{pi}L^{sep}$ over the field $L^{pi}$. In particular, all its coefficients belong to the field $L^{pi}$, i.e. 
 $$\l_i^\frac{1}{p^{m^*}}\in L^{pi}\;\; \text{ for all}\;\;i=1, \ldots , s-1.
 $$
  Therefore, $f^{sep,\frac{1}{p^{m^*}}}(x)\in \Irr_m(M^*[x])$ where $M^*:=K\bigg(\l_0^\frac{1}{p^{m^*}}, \ldots ,  \l_{s-1}^\frac{1}{p^{m^*}}\bigg)\subseteq L^{pi}$.

The polynomial $f^{sep,\frac{1}{p^{m^*}}}(x)=\sum_{i=0}^s\l_i^\frac{1}{p^{m^*}}x^i\in \in \Irr_m(M^*[x])$ is a separable polynomial over the field $M^*$ (since it's derivative is a nonzero polynomial). So, we have proven the statement (v).

(v) $f^{sep,\frac{1}{p^{m^*}}}(x)\in \Irr_m(M^*[x])$ {\em and the field extension  $M^*(\th^{p^{n-m^*}})/M^*$ is a separable field extension of degree} $[M^*(\th^{p^{n-m^*}}):M^*]=\deg\Big(f^{sep,\frac{1}{p^{m^*}}}(x)\Big)=s$. 

(vi) $L^{pi}(\th^{p^{n-m^*}})=M^*(\th^{p^{n-m^*}})$: There is a chain of fields $$
L^{sep}=K(\th^{p^n})\subseteq M^*(\th^{p^{n-m^*}})\subseteq L^{pi}(\th^{p^{n-m^*}})=L^{pi}L^{sep}\subseteq L=K(\th).$$
Since $p^{n-m^*}\stackrel{{\rm (i)}}{=}[L:L^{pi}L^{sep}]=[L:L^{pi}(\th^{p^{n-m^*}})]\leq  [L:M^*(\th^{p^{n-m^*}})]\leq p^{n-m^*}$ , we must have 
$$
[L:L^{pi}(\th^{p^{n-m^*}})]= [L:M^*(\th^{p^{n-m^*}})].
$$
Now, the inclusion  $L^{pi}(\th^{p^{n-m^*}})\supseteq M^*(\th^{p^{n-m^*}})$ implies the equality  
$L^{pi}(\th^{p^{n-m^*}})=M^*(\th^{p^{n-m^*}})$.

(vii) $[M^*:K]=p^{m^*}$: By the statement (vi), there is a diagram of fields where the numbers at the edges are the degrees of the corresponding field extensions: 

\[
\begin{tikzpicture}[scale=1.8, every node/.style={font=\normalsize}]

\node (A) at (0,0) {$K$};
\node (B) at (-1.2,0.9) {$M^*$};
\node (D) at (1.2,0.9) {$L^{sep}=K(\th^{p^n})$};
\node (C) at (0,1.7) {$L^{pi}(\th^{p^{n-m^*}})=M^*(\th^{p^{n-m^*}})$};


\draw (C) -- node[right] {$p^{m^*}$} (D);
\draw (D) -- node[right] {$s$} (A);
\draw (A) -- node[left]  {$$} (B);
\draw (B) -- node[left]  {$s$} (C);

\end{tikzpicture}
\]

The equalities $[L^{sep}:K]=s$,   $[M^*(\th^{p^{n-m}}):M^*]=s$ and 
$$
[M^*(\th^{p^{n-m}}):L^{sep}]= [L^{pi}(\th^{p^{n-m^*}}):L^{sep}]= [L^{pi}L^{sep}:L^{sep}]    =p^{m^*}
$$
 follow from  statement 1 and  the statements (v) and  (ii), respectively. Now,  the diagram yields the equality $[M^*:K]= p^{m^*}$:
 $$
 [M^*:K]=\frac{[M^*(\th^{p^{n-m}}):L^{sep}][L^{sep}:K]}{[M^*(\th^{p^{n-m}}):M^*]}= \frac{p^{m^*}s}{s}
  =p^{m^*}.
  $$ 

(viii) $L^{pi}=M^*$ {\em and} $\Big( L^{pi}\Big)^{p^{m^*}}\subseteq K$: By the statements (iii) and  (vii), $[L^{pi}:K]=p^{m^*}=[M^*:K]$. Then the equality 
 $L^{pi}=M^*$ follows from the inclusion $L^{pi}\supseteq M^*$. The equality  $L^{pi}=M^*$ yields the inclusion 
 $$
 \Big( L^{pi}\Big)^{p^{m^*}}=\Big( M^*\Big)^{p^{m^*}}=\bigg( K\bigg(\l_0^\frac{1}{p^{m^*}}, \ldots ,  \l_{s-1}^\frac{1}{p^{m^*}}\bigg)\bigg)^{p^{m^*}}\subseteq K.
 $$
(ix) $(L/L^{pi})^{sep}=L^{pi}L^{sep}/L^{pi}$:
 Since the field extension $L^{pi}L^{sep}/L^{pi}$ is a separable field extension, we have the inclusion 
 $$(L/L^{pi})^{sep}\supseteq L^{pi}L^{sep}/L^{pi}.
 $$ 
Since the field extension $L/L^{pi}L^{sep}=K(\th)/L^{pi}(\th^{p^{n-m^*}})$ is purely inseparable, we must have the equality  the equality $(L/L^{pi})^{sep}=L^{pi}L^{sep}/L^{pi}$.

 (x) $m^*=m$: By the definition of the number   $m^*$, 
$$ 
\th^\frac{1}{p^{n-m'}}\not\in L^{pi}\t L^{sep}\;\; \text{for all $m'$ such that $ m^*<m'\leq n$. }
$$
By the statements (v) and (viii),   $f^{sep,\frac{1}{p^{m^*}}}(x)\in \Irr_m(M^*[x])=\Irr_m(L^{pi}[x])$. By the definition of the number $m=m_f$, 
 $$
 f^{sep,\frac{1}{p^{m}}}(x):=\sum_{i=0}^s\l_i^\frac{1}{p^m} x^i\in L^{pi}[x].  
 $$
 Therefore,  $m^*\leq m$ (by the maximality of $m$). Since $f(x)\in \Irr_m(K[x])$,  $f(x)=\Big(f^{sep,\frac{1}{p^{m}}}(x^{p^{n-m}}) \Big)^{p^m}$ and $\Big( L^{pi}\Big)^{p^{m^*}}\subseteq K$ (the statement (viii)), the polynomial 
 $f^{sep,\frac{1}{p^{m}}}(x)\in L^{pi}[x]$
 is an {\em irreducible} polynomial over the field $L^{pi}$. Otherwise,  $f^{sep,\frac{1}{p^{m}}}(x)=a(x)b(x)$ for some non-scalar polynomials  $a(x), b(x)\in L^{pi}[x]$, and so 
 $$
 f(x)=\bigg(f^{sep,\frac{1}{p^{m}}}(x^{p^{n-m}}) \bigg)^{p^m}=\bigg(a(x^{p^{n-m}})b(x^{p^{n-m}}) \bigg)^{p^m}=a(x^{p^{n-m}})^{p^m}b(x^{p^{n-m}})^{p^m}
 $$
where  $a(x^{p^{n-m}})^{p^m}, b(x^{p^{n-m}})^{p^m}\in K[x]\backslash K$, a contradiction. 

The polynomial 
 $f^{sep,\frac{1}{p^{m}}}(x)\in L^{pi}[x]$
 is a {\em separable} polynomial over the field $L^{pi}$ (since it is an irreducible polynomial over $L^{pi}$ and its derivative is a nonzero polynomial). The equality
 $$0=f(\th)=\Big(f^{sep,\frac{1}{p^{m}}}(\th^{p^{n-m}}) \Big)^{p^m}$$
implies the equality  $f^{sep,\frac{1}{p^{m}}}(\th^{p^{n-m}})=0$. This means that 
$$
\th^{p^{n-m}}\in (L/L^{pi})^{sep}\stackrel{{\rm (ix)}}{=}L^{pi}L^{sep}/L^{pi}.
$$
 Now, we must have $m^*=m$ (since otherwise, $m^*<m$ and  $\th^{n-m}\not\in L^{pi}L^{sep}$, a contradiction).

(xi) $\max\{m'\in \N \, | \, \th^{p^{n-m'}}\in L^{pi}L^{sep} \}=\max\{m'\in \N \, | \, L^{p^{n-m'}}\subseteq  L^{pi}L^{sep} \}$:
 By statement 1, the field extension $L/L^{pi}L^{sep}=K(\th)/L^{pi}L^{sep}$ is a purely inseparable field extension since $\th^{p^n}\in L^{sep}$. Now, the equality  in the statement (xi) follows from the equality $L=K(\th)$.

 (xii) {\em The number $m$ is an isomorphism invariant of the field extension} $L/K$: By the definition, the number $m^*=\max\{m'\in \N \, | \, L^{p^{n-m'}}\subseteq  L^{pi}L^{sep} \}$ (the statement (xi)) is an isomorphism invariant of the field extension $L/K$. Hence, so is the number $m=m^*$ (the statement (x)). 
\end{proof}

{\bf The invariant $m_f$ and the maximal purely inseparable subfield $L^{pi}$ of $L$.}  By Theorem \ref{27Apr26}.(2), $L^{pi}=K\Big(\l_0^\frac{1}{p^{m_f}}, \ldots ,  \l_{s-1}^\frac{1}{p^{m_f}}\Big)$. Therefore, the invariant  $m_f$ uniquely determines the field $L^{pi}$. 
For  natural numbers $m\geq 1$ and $s\geq 0$, let 
$$
\N_{<p^m}^s:=\Big\{ \alpha =(\alpha_0,\alpha_1, \ldots , \alpha_{s-1})\, \Big| \, 0\leq \alpha_i< p^m\; {\rm for}\;\; i=0,1,\ldots, s-1  \Big\}.
$$
Lemma \ref{a17May26} provides two different methods for determining the invariant $m_f$.

\begin{lemma}\label{a17May26}
Suppose that $K$ is a field of prime characteristic $p>0$, $L/K$ is a simple finite  field extension and $L=K(\th)=K[x]/(f(x))$ where $f(x)=f^{sep}(x^{p^{n}})=\sum_{i=0}^s\l_ix^{ip^n}\in \Irr_m(K[x])$, $\l_i\in K$ and  $\l_s=1$. Then: 
\begin{enumerate}

\item The number $m_f$ is the maximal number $m\in \{ 0,1,\ldots , n\}$ such that 
there exists (necessarily unique)  elements $\l_{m, i,j}\in K$ such that 
$$ \l_i^\frac{1}{p^m}=\sum_{j=0}^{sp^n-1}\l_{m,i,j}\th^j,\;\;i=0,1,\ldots , s-1,
$$
or, equivalently, 
$$ \l_i=\sum_{j=0}^{sp^n-1}\l_{m,i,j}^{p^m}\th^{jp^m},\;\;i=0,1,\ldots , s-1.
$$

\item The number $m_f$ is the maximal number $m\in \{ 0,1,\ldots , n\}$ such that 
there exist  elements $\g_{m, i}=\sum_{\alpha \in \N_{<p^m}^s} \g_{m,i,\alpha} \l^\frac{\alpha}{p^m}$, $i=0,1,\ldots , s-1$, where $\g_{m,i,\alpha}\in K$, $\alpha =(\alpha_0,\alpha_1, \ldots , \alpha_{s-1})$ and $\l^\frac{\alpha}{p^m}:=\prod_{j=0}^{s-1}\l_j^\frac{\alpha_j}{p^m}$,  such that 
$$
\th^{p^{n-m}}=\sum_{i=0}^{s-1}\g_{m,i}\th^{ip^n},
$$
or, equivalently,
$$
\th^{p^n}=\sum_{i=0}^{s-1}\g_{m,i}^{p^m}\th^{ip^{n+m}}
$$
where $\g_{m,i}^{p^m}=\sum_{\alpha \in \N_{<p^m}^s} \g_{m,i,\alpha}^{p^m} \l^{\alpha}\in K$ and $\l^{\alpha} =\prod_{j=0}^{s-1}\l_j^{\alpha_j}$.

\end{enumerate}
\end{lemma}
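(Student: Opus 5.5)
Both parts reduce to Definition \ref{DEF=mf} and Theorem \ref{27Apr26}; the plan is to rewrite the relevant membership conditions in terms of explicit bases.

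\textbf{Part 1.} By Definition \ref{DEF=mf}, $m_f$ is the largest $m\in\{0,\ldots,n\}$ with $\l_i^{1/p^m}\in L$ for all $i=0,\ldots,s-1$. Since $L=K(\th)$ has $K$-basis $1,\th,\ldots,\th^{sp^n-1}$ (as $\deg f=sp^n$), the condition $\l_i^{1/p^m}\in L$ is equivalent to the existence of coefficients $\l_{m,i,j}\in K$ with $\l_i^{1/p^m}=\sum_j\l_{m,i,j}\th^j$. These coefficients are unique by linear independence of the basis. For the equivalent form, raise both sides to the power $p^m$, using additivity of Frobenius, to get $\l_i=\sum_j\l_{m,i,j}^{p^m}\th^{jp^m}$. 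Conversely, given such an identity in $L$, the element $\sum_j\l_{m,i,j}\th^j\in L$ has $p^m$-th power equal to $\l_i$. Frobenius is injective on $\bK$, so this element equals $\l_i^{1/p^m}$, and therefore $\l_i^{1/p^m}\in L$. So the two formulations of Part 1 are equivalent.

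\textbf{Part 2.} The starting point is Theorem \ref{27Apr26}.(2), which gives
\[
m_f=\max\{m' \mid \th^{p^{n-m'}}\in L^{pi}L^{sep}\}.
\]
Write $M_m:=K(\l_0^{1/p^m},\ldots,\l_{s-1}^{1/p^m})$. The plan is to show that the condition in Part 2 is equivalent to $\th^{p^{n-m}}\in M_m(\th^{p^n})$ inside $L$. The monomials $\l^{\alpha/p^m}$ with $\alpha\in\N^s_{<p^m}$ span $M_m$ over $K$, because each generator satisfies $(\l_j^{1/p^m})^{p^m}\in K$. Also $M_m(\th^{p^n})$ is spanned over $M_m$ by $1,\th^{p^n},\ldots,\th^{(s-1)p^n}$, because $\th^{p^n}$ has degree $s$ over $K$ and hence degree at most $s$ over $M_m$. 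So the displayed expression for $\th^{p^{n-m}}$ exists exactly when $\th^{p^{n-m}}\in M_m(\th^{p^n})$. The second form of the condition again follows by applying the injective Frobenius $x\mapsto x^{p^m}$.

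It remains to compare the maximum of this condition with $m_f$.

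For $m=m_f$: Theorem \ref{27Apr26}.(2),(3) give $M_{m_f}=L^{pi}$ and $\th^{p^{n-m_f}}\in L^{pi}L^{sep}=L^{pi}(\th^{p^n})=M_{m_f}(\th^{p^n})$. So the condition holds at $m_f$.

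For $m>m_f$: suppose the condition holds. Then $\th^{p^{n-m}}\in M_m(\th^{p^n})\subseteq\bK$, where the identity is read in $\bK$ because $M_m$ need not lie in $L$. The main obstacle is to show that this forces $M_m\subseteq L$.

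What I would try first is a degree count. If $\th^{p^{n-m}}\in M_m(\th^{p^n})$, then $M_m(\th^{p^{n-m}})\subseteq M_m(\th^{p^n})$, so
\[
[M_m(\th^{p^{n-m}}):M_m]\le s.
\]
On the other hand, $\th^{p^{n-m}}$ is a root of $\sum_i\l_i^{1/p^m}x^i\in M_m[x]$, which is irreducible over $M_m$ by the same argument as in step (x) of Theorem \ref{27Apr26}. Hence $M_m(\th^{p^{n-m}})=M_m(\th^{p^n})$ has degree $s$ over $M_m$ and is separable over $M_m$.

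Next I would use that $K(\th^{p^{n-m}})\subseteq L$ contains $\th^{p^n}$. The goal is to show that the coefficients $\l_i^{1/p^m}$ of the minimal polynomial of $\th^{p^{n-m}}$ over $M_m$ lie in $K(\th^{p^{n-m}})$. For this I would compare degrees: $[K(\th^{p^{n-m}}):K]=sp^m$, and $[M_m(\th^{p^n}):K]=s[M_m:K]$ with $[M_m:K]\le p^m$. Since $K(\th^{p^{n-m}})\subseteq M_m(\th^{p^n})$, this forces $[M_m:K]=p^m$ and $K(\th^{p^{n-m}})=M_m(\th^{p^n})\supseteq M_m$. Then $M_m\subseteq L$, i.e. each $\l_i^{1/p^m}\in L$. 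This gives $m\le m_f$ by Definition \ref{DEF=mf}, contradicting $m>m_f$. This contradiction establishes Part 2.
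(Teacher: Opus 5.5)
Part~1, your reduction of the Part~2 condition to $\theta^{p^{n-m}}\in M_m(\theta^{p^n})$, and the case $m=m_f$ are fine and agree with the paper. The proposal breaks in the case $m>m_f$. You assert the bound $[M_m:K]\le p^m$ without justification, and it is false in general. Take $p>2$, $K=\mathbb{F}_p(u,v)$, $f=x^{2p}+ux^p+v$ (so $n=1$, $s=2$) and $m=1$; then $[M_1:K]=[K(u^{1/p},v^{1/p}):K]=p^2$.

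More seriously, no argument can close this step. Under your reading (identity in $\overline{K}$, with $\gamma_{m,i}$ only required to lie in $M_m$), the condition holds for every $m\le n$. Indeed $M_m(\theta^{p^n})\subseteq M_m(\theta^{p^{n-m}})$. The larger field has degree $s$ over $M_m$, by the irreducibility of $\sum_i\lambda_i^{1/p^m}x^i$ over $M_m$ that you yourself invoke. The smaller field also has degree $s$ over $M_m$, by Lemma~\ref{a9May26} applied to $f^{sep}$. So the two fields coincide unconditionally.

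In the example, $m_f=0$ because $p^2\nmid 2p=[L:K]$, yet $\theta\in M_1(\theta^p)$. Equivalently, $\theta^p=c_0+c_1\theta^{p^2}$ with $c_0,c_1\in K=K^p(u,v)$, simply because $\theta^p$ is separable over $K$ and hence $K(\theta^p)=K(\theta^{p^2})$. Thus, read in $\overline{K}$, the maximum in Part~2 is always $n$, and Part~2 as written is false whenever $m_f<n$.

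The paper's proof avoids this obstacle only because it tacitly works inside $L^{pi}L^{sep}=\bigoplus_{i<s}L^{pi}\theta^{ip^n}\subseteq L$ with $L^{pi}=M_{m_f}$. In other words, it takes the coefficients $\gamma_{m,i}$ in $L$. That is the true version, and the one you should prove. Require $\gamma_{m,i}\in L$, and in the second form require that the $p^m$-th roots of the coefficients lie in $L$.

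With that requirement the case $m>m_f$ needs no degree count at all. Since $M_m\cap L$ is purely inseparable over $K$, you get $\gamma_{m,i}\in M_m\cap L\subseteq L^{pi}$. The identity then puts $\theta^{p^{n-m}}$ in $L^{pi}(\theta^{p^n})=L^{pi}L^{sep}$, which contradicts the description of $m_f$ in Theorem~\ref{27Apr26}.(2).
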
 

\begin{proof} 1.  By the definition, $m_f=\max\bigg\{ m=0,1,\ldots , n\, \bigg| \, \l_i^\frac{1}{p^{m}}\in L$ for all $i=0, \ldots , s-1\bigg\}$. Now, statement 1 follows from  the equalities   $L=\bigoplus_{i=0}^{sp^n-1}K\th^i$ and  $L^{pi}=K\Big(\l_0^\frac{1}{p^{m_f}}, \ldots ,  \l_{s-1}^\frac{1}{p^{m_f}}\Big)$ (Theorem \ref{27Apr26}.(2)).

2.  By Theorem \ref{27Apr26}.(1,3) and Theorem \ref{27Apr26}.(2), 
\begin{eqnarray*}
L^{pi}L^{sep}&=&L^{pi}\t L^{sep}=L^{pi}\t K(\th^{p^n})=\bigoplus_{i=0}^{s-1}L^{pi}\th^{ip^n},\\
L^{pi}&=&K\Big(\l_0^\frac{1}{p^{m_f}}, \ldots ,  \l_{s-1}^\frac{1}{p^{m_f}}\Big)=\sum_{\alpha\in \N_{<p^{m_f}}}K\l^\frac{\alpha}{p^{m_f}},
\end{eqnarray*}
respectively. By Theorem \ref{27Apr26}.(4), the number $m_f$ is the maximal number $m\in \{ 0,1,\ldots , n\}$ such that $\th^{p^{n-m}}\in L^{pi}L^{sep}$. Now, statement 2 follows. 
\end{proof}

In statement 1, for each $i=0,1,\ldots , s-1$,  the coefficients $\l_{m,i,j}^{p^m}\in K^{p^n}\subseteq K$ in the equation 
$$
\l_i=\sum_{j=0}^{sp^n-1}\l_{m,i,j}^{p^m}\th^{jp^m}
$$
 are exactly the solutions to the linear system generated by reducing the sum
modulo $f(\theta) = 0$ using the relations established by $\lambda_i$.

Similarly, in statement 2,  the coefficients $\g_{m,i}^{p^m}\in K^{p^n}\subseteq K$ in the equation 
$$
\th^{p^n}=\sum_{i=0}^{s-1}\g_{m,i}^{p^m}\th^{ip^{n+m}}
$$
 are exactly the solutions to the linear system generated by reducing the sum
modulo $f(\theta) = 0$ using the relations established by $\lambda_i$.\\

{\bf Criterion for $L^{pi}=K$ for a simple field extension $L/K$.} For a simple field extension $L/K$, Corollary  \ref{a7May26} is  an explicit criterion for $L^{pi}=K$.
 

\begin{corollary}\label{a7May26}
Suppose that $K$ is a field of prime characteristic $p>0$, $L/K$ is a simple finite  field extension and $L=K(\th)=K[x]/(f(x))$ where $f(x)=f^{sep}(x^{p^{n}})=\sum_{i=0}^s\l_ix^{ip^n}\in \Irr_m(K[x])$, $\l_i\in K$ and  $\l_s=1$. Then the following statements are equivalent: 
\begin{enumerate}

\item $L^{pi}=K$.

\item Either $n=0$ or $n\geq 1$ and $\l_i^\frac{1}{p}\not\in L$ for some index $i\in \{ 0,1,\ldots,  s-1\}$. 

\item $m_f=0$.

\item $[L:L^{sep}]=n$.

\end{enumerate}
\end{corollary}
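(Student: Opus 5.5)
The plan is to prove everything through the invariant $m=m_f$, using Theorem \ref{27Apr26}, and to show that each of statements 1, 2 and 4 is equivalent to statement 3. I read statement 4 as the degree condition $[L:L^{pi}L^{sep}]=p^n$, equivalently $[L^{pi}L^{sep}:L^{sep}]=1$. The literal equality $[L:L^{sep}]=n$ cannot be intended: by Theorem \ref{27Apr26}.(1) we always have $[L:L^{sep}]=\deg(f)/s=p^n$. This reading is the one that matches the role of $m_f$ in diagram (\ref{L=LLL}).

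$(1)\Leftrightarrow(3)$. By Theorem \ref{27Apr26}.(2), $[L^{pi}:K]=p^{m_f}$. Hence $L^{pi}=K$ if and only if $p^{m_f}=1$, that is, if and only if $m_f=0$.

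$(2)\Leftrightarrow(3)$. This follows from Definition \ref{DEF=mf}. First note that the condition defining $m_f$ is downward closed in $m'$. If $\l_i^{1/p^{m'}}\in L$ and $m''\le m'$, then $\l_i^{1/p^{m''}}=\bigl(\l_i^{1/p^{m'}}\bigr)^{p^{m'-m''}}\in L$. Consequently $m_f\ge 1$ holds exactly when $n\ge 1$ (so that $m'=1$ lies in the admissible range $\{0,\ldots,n\}$) and $\l_i^{1/p}\in L$ for all $i=0,\ldots,s-1$. Negating this gives $m_f=0$ if and only if either $n=0$, or $n\ge1$ and $\l_i^{1/p}\notin L$ for some $i$. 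That is exactly statement 2.

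$(3)\Leftrightarrow(4)$. By Theorem \ref{27Apr26}.(4), $[L:L^{pi}\t L^{sep}]=p^{n-m_f}$, and by Theorem \ref{27Apr26}.(3), $[L^{pi}L^{sep}:L^{sep}]=p^{m_f}$. Either degree equals its extreme value ($p^n$, respectively $1$) precisely when $m_f=0$.

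The only real obstacle is the wording of statement 4. Beyond that, the argument is a direct bookkeeping of the degree formulas already established in Theorem \ref{27Apr26}, together with the monotonicity remark that makes the case $m'=1$ decisive in the definition of $m_f$.
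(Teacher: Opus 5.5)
Your proof is correct and follows the same route as the paper. The paper's proof is a single chain of equivalences read off Theorem~\ref{27Apr26} and diagram~(\ref{L=LLL}); your downward-closure remark makes explicit the step $(2)\Leftrightarrow(3)$ that the paper leaves implicit.

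You are also right about statement 4. Since $[L:L^{sep}]=p^n\neq n$ always, the paper's final equivalence with $[L:L^{sep}]=n$ is false as written. Your reading $[L:L^{pi}L^{sep}]=p^n$ (equivalently $[L^{pi}L^{sep}:L^{sep}]=1$) is the one that makes the corollary true.
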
 

\begin{proof}  By Theorem \ref{27Apr26} or diagram (\ref{L=LLL}),  $L^{pi}=K$ iff  $m_f=0$ iff either $n=0$ or $n\geq 1$ and $\l_i^\frac{1}{p}\not\in L$ for some index  $i\in \{ 0,1,\ldots,  s-1\}$ and $m_f=0$ iff $[L:L^{sep}]=n$.
\end{proof}

\begin{example}[An example where  $L^{pi}=K$]  Let $p > 2$ be a prime,  $K = \mathbb{F}_p(u, v)$ be the  field  of rational functions in two variables over $\mathbb{F}_p$ and  $L = K(\th)$ where $\th$ is a root of the irreducible polynomial
$$ 
f(x) = x^{2p} + ux^p + v.
$$
Clearly, $f^{sep}(x)=x^2 + ux + v$ and $n:=\deg_{\rm ins}(f)=1$. Since $u^\frac{1}{p}\not \in L$, we have that $m_f=0$ and  then,  by Corollary  \ref{a7May26}, $L^{pi}=K$. 
\end{example}

{\bf Criteria for $L = L^{pi}L^{sep}$ where $L/K$ is a simple finite field extension.}  
For a simple field extension $L/K$ of prime  characteristic $p>0$, Theorem \ref{B27Apr26} presents new explicit criteria for $L=L^{pi}L^{sep}$. 

\begin{theorem}\label{B27Apr26}
Suppose that $K$ is a field of prime characteristic $p>0$,  $L/K$ is a simple finite  field extension and $L=K(\th)=K[x]/(f(x))$ where $f(x)=f^{sep}(x^{p^{n}})=\sum_{i=0}^s\l_ix^{ip^n}\in \Irr_m(K[x])$, $\l_i\in K$ and  $\l_s=1$. Then the following statements are equivalent:

\begin{enumerate}

\item $L = L^{pi}L^{sep}\Big(=L^{pi}\t L^{sep}\Big)$.
 
\item  $\l_i^\frac{1}{p^n}\in L$ for all $i=1, \ldots , s-1$, i.e. $m_f=n$.

\item  $L^{pi}=K\Big( \l_0^\frac{1}{p^n}, \ldots , \l_{s-1}^\frac{1}{p^n} \Big)$. 

\end{enumerate}
\end{theorem}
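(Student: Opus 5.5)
The plan is to reduce all three conditions to the single numerical condition $m_f=n$, using Theorem \ref{27Apr26}. Statement 2 is literally the statement $m_f=n$: by Definition \ref{DEF=mf}, $m_f$ is the largest $m'\le n$ with $\l_i^{\frac{1}{p^{m'}}}\in L$ for all $i=0,\ldots,s-1$. So $m_f=n$ holds exactly when $\l_i^{\frac{1}{p^n}}\in L$ for all $i$. The index range in statement 2 should be read as $i=0,\ldots,s-1$, consistent with the definition of $m_f$.

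(1)$\Leftrightarrow$(2). By Theorem \ref{27Apr26}.(4), $[L:L^{pi}\t L^{sep}]=p^{n-m_f}$. Since $L^{pi}L^{sep}=L^{pi}\t L^{sep}$ (Theorem \ref{A27Apr26}), we get $L=L^{pi}L^{sep}$ iff $p^{n-m_f}=1$ iff $m_f=n$.

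(2)$\Rightarrow$(3). If $m_f=n$, then Theorem \ref{27Apr26}.(2) gives $L^{pi}=K\big(\l_0^{\frac{1}{p^{n}}},\ldots,\l_{s-1}^{\frac{1}{p^{n}}}\big)$ directly.

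(3)$\Rightarrow$(2). This is the only step needing a small argument. I would argue via membership: if $L^{pi}$ equals that field, then every $\l_i^{\frac{1}{p^n}}$ lies in $L^{pi}\subseteq L$. This is condition 2, so $m_f=n$ by maximality in the definition of $m_f$. A degree argument is an alternative, but it is less clean, because the degree of $K\big(\l_i^{\frac{1}{p^n}}\big)$ over $K$ need not be $p^n$ a priori. The membership argument avoids this.

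I expect no real obstacle. The only point of care is the indexing of $i$ noted above, and the observation that the hypothesis in (3) forces the $p^n$-th roots into $L$.
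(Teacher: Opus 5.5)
Your proposal is correct and is essentially the paper's proof: (1)$\Leftrightarrow$(2) from $[L:L^{pi}\t L^{sep}]=p^{n-m_f}$ (Theorem \ref{27Apr26}.(4)), and (2)$\Leftrightarrow$(3) from Theorem \ref{27Apr26}.(2), where your membership argument for (3)$\Rightarrow$(2) spells out a step the paper leaves implicit. Your reading of the index range is harmless either way, since $f(\th)=0$ gives $\l_0^{\frac{1}{p^n}}=-\sum_{i=1}^{s}\l_i^{\frac{1}{p^n}}\th^i$, so the condition for $i=0$ follows from the conditions for $i=1,\ldots,s-1$.
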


\begin{proof} $(1\Leftrightarrow 2)$ By Theorem \ref{27Apr26}.(4), $[L:L^{pi}\t L^{sep}]=p^{n-m_f}$ and the result follows.

$(2\Leftrightarrow 3)$ By Theorem \ref{27Apr26}.(2), the equality $m_f=n$ is equivalent to the equality $L^{pi}=K\Big( \l_0^\frac{1}{p^n}, \ldots , \l_{s-1}^\frac{1}{p^n} \Big)$. 
\end{proof}

\begin{example}[A counterexample where $L \neq L^{pi}L^{sep}$]
Let $p > 2$ be a prime,  $K = \mathbb{F}_p(u, v)$ be the  field  of rational functions in two variables over $\mathbb{F}_p$ and  $L = K(\th)$ where $\th$ is a root of the irreducible polynomial
$$ 
f(x) = x^{2p} + ux^p + v.
$$
Clearly, $f^{sep}(x)=x^2 + ux + v$ and $n:=\deg_{\rm ins}(f)=1$. Since $u^\frac{1}{p}\not \in L$, we have that $m_f=0\neq 1=n$ and  then,  by Theorem \ref{B27Apr26}, $L \neq L^{pi}L^{sep}$. 
\end{example}


\section{Explicit descriptions of the subfields $(NL)^{pi}$ and $(NL)^{pi}(NL)^{sep}$  of $NL/N$ where $L/K$ is a simple field extension and $N/K$ is a purely inseparable field extension}\label{NL=NLPI}

In this section, for a simple field extension $L/K=K(\th)/K$ and a purely inseparable field extension $N/K$ (not necessarily finite),
 Theorem \ref{A6May26} describes the structure of the compositum $NL$, the degree $[NL:N]$ and the minimal polynomial for the simple field extension $NL/N$. 
 Theorem \ref{VB-27Apr26} yields explicit descriptions of the following subfields of  $NL/N$ --  $(NL)^{pi}/N$, $(NL)^{sep}/N$ and their  compositum $(NL)^{pi}(NL)^{sep}/N$ --  in terms of the coefficients of the minimal  polynomial $f$ of the element  $\th\in L$ over $K$ and two natural numbers (that are field invariants) $m_{f,N}$ and $m_{f,N(\th)}$ associated with  $f$,  $N$ and $NL$. We compute explicit numerical values for the following degrees:  $[(NL)^{pi}:N]$, $[(NL)^{sep}:N]$, $[(NL)^{pi}(NL)^{sep}:(NL)^{pi}]$,  $[(NL)^{pi}(NL)^{sep}:(NL)^{sep}]$  and $[L:(NL)^{pi}(NL)^{sep}]$.   Corollary  \ref{Na7May26} is  an explicit criterion for $(NL)^{pi}=N$. Theorem \ref{NB27Apr26} is an explicit criteria for $NL=(NL)^{pi}(NL)^{sep}$. \\

{\bf The structure of the field extension $NL=N (\th)$  where $N/K$ is a purely inseparable field extension.}  Suppose that $L=K(\th)=K[x]/(f)$ is a simple field extension, where $\th \in L$,  and the polynomial  $f(x)=f^{sep}(x^{p^n})=\sum_{i=0}^s\l_ix^{ip^n}\in \Inn_m(K[x])$ is the minimal polynomial of $\th$. 

\begin{definition}\label{Def=mfN}
For a purely inseparable field extension $N/K$, let 
\begin{eqnarray*}
m_{f,N}&:=&\max\bigg\{ m'=0,1,\ldots , n\, \bigg| \, \l_i^\frac{1}{p^{m'}}\in N\;\; \text{ for all}\;\;i=0, \ldots , s-1\bigg\},\\
f_N(x)&:=&\sum_{i=0}^s\l_i^\frac{1}{p^{m_{f,N}}}x^{ip^{n-m_{f,N}}}\in N[x],\\
f^{sep}_N(x)&:=&\sum_{i=0}^s\l_i^\frac{1}{p^{m_{f,N}}}x^i\in N[x],\\
M_{f,N}&:=&K\bigg(\l_0^\frac{1}{p^{m_{f,N}}}, \ldots ,  \l_{s-1}^\frac{1}{p^{m_{f,N}}}\bigg).
\end{eqnarray*}
\end{definition}
Notice that the field extension $M_{f,N}/K$ is a purely inseparable  finite field extension, 
 $M_{f,N}\subseteq N$ and  $M_{f,N}=M_{f,M_{f,N}}$.

\begin{proposition}\label{6May26}
Suppose that $K$ is a field of prime characteristic $p>0$, $L/K$ is a simple finite  field extension and $L=K(\th)=K[x]/(f(x))$ where $f(x)=f^{sep}(x^{p^{n}})=\sum_{i=0}^s\l_ix^{ip^n}\in \Irr_m(K[x])$, $\l_i\in K$, $\l_s=1$ and  $N/K$ is a purely inseparable field extension such that $N^{p^{m_{f,N}}}\subseteq K$. Then: 
\begin{enumerate}

\item $f_N\in \Irr_m(N[x])$, $f_N\in \Irr_m (M_{f,N}[x])$, $f= f_N^{p^{m_{f,N}}}$,   $\deg (f_N)=sp^{n-m_{f,N}}$ where $M_{f,N}=K\bigg(\l_0^\frac{1}{p^{m_{f,N}}}, \ldots ,  \l_{s-1}^\frac{1}{p^{m_{f,N}}}\bigg)$.

\item $N(\th)\simeq N[x]/(f_N)$ and  $[N(\th):N]=\deg (f_N)=sp^{n-m_{f,N}}$.

\item $M_{f,N}(\th)\simeq M_{f,N}[x]/(f_N)$ and  $[M_{f,N}(\th):M_{f,N}]=\deg (f_N)=sp^{n-m_{f,N}}$.

\end{enumerate}
\end{proposition}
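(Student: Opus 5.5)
The plan is to use the Frobenius map to reduce everything to the irreducibility of $f$ over $K$, in the same way as step (x) of the proof of Theorem \ref{27Apr26}. Throughout, write $m:=m_{f,N}$.

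First, I will check the elementary facts in statement 1. By Definition \ref{Def=mfN}, $\l_i^\frac{1}{p^m}\in N$ for $i=0,\ldots ,s-1$, and $\l_s=1$. Hence $f_N(x)=\sum_{i=0}^s\l_i^\frac{1}{p^m}x^{ip^{n-m}}$ is a monic polynomial lying in $M_{f,N}[x]\subseteq N[x]$. Its degree is $sp^{n-m}$, since its leading term is $x^{sp^{n-m}}$. Since $p$-th powering is additive in characteristic $p$,
$$
f_N(x)^{p^m}=\sum_{i=0}^s\l_i x^{ip^n}=f(x).
$$

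The key step is the irreducibility of $f_N$ over $N$, and this is where the hypothesis $N^{p^m}\subseteq K$ is used. Suppose $f_N=a(x)b(x)$ with $a,b\in N[x]\backslash N$. Raising to the $p^m$-th power gives $f=a(x)^{p^m}b(x)^{p^m}$. The coefficients of $a^{p^m}$ are the $p^m$-th powers of the coefficients of $a$, so they lie in $N^{p^m}\subseteq K$; the same holds for $b^{p^m}$. Thus $a^{p^m},b^{p^m}\in K[x]\backslash K$, which contradicts $f\in \Irr_m(K[x])$. Hence $f_N\in\Irr_m(N[x])$. Irreducibility over the smaller field $M_{f,N}$ follows at once: $f_N$ has coefficients in $M_{f,N}\subseteq N$, and any factorization over $M_{f,N}$ would also be a factorization over $N$. (Alternatively, the same Frobenius argument applies directly, because $M_{f,N}^{p^m}\subseteq K$.)

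For statements 2 and 3, note that $f_N(\th)^{p^m}=f(\th)=0$. Since a field has no nilpotents, $f_N(\th)=0$. Therefore $f_N$ is a monic irreducible polynomial over $N$ (respectively over $M_{f,N}$) with $\th$ as a root, so it is the minimal polynomial of $\th$ over that field. This gives $N(\th)\simeq N[x]/(f_N)$ with $[N(\th):N]=sp^{n-m}$, and the same for $M_{f,N}$.

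I do not expect a real obstacle. The only delicate point is making sure that the $p^m$-th powers of the factors land in $K[x]$, which is exactly what the hypothesis $N^{p^{m_{f,N}}}\subseteq K$ guarantees.
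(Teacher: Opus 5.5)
Your proposal is correct and follows essentially the same route as the paper. Both arguments raise a putative factorization $f_N=ab$ over $N$ to the $p^{m_{f,N}}$-th power, use $N^{p^{m_{f,N}}}\subseteq K$ to contradict the irreducibility of $f$ over $K$, pass irreducibility down to $M_{f,N}\subseteq N$, and read off statements 2 and 3. You additionally spell out that $f_N(\theta)=0$ because $f_N(\theta)^{p^{m_{f,N}}}=f(\theta)=0$, a step the paper leaves implicit.
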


\begin{proof}  1. The equality  $f_= f_N^{p^{m_{f,N}}}$ is obvious.

Suppose that $f_N\not\in \Irr_m(N[x])$ and 
we seek a contradiction. Then $f_N(x)=ab$ for some non-scalar polynomials $a,b\in N[x]$. Then $a^{p^{m_{f,N}}}, b^{p^{m_{f,N}}}\in K[x]$ (since $N^{p^{m_{f,N}}}\subseteq K$) and 
$$ 
f= f_N^{p^{m_{f,N}}}=\Big(ab \Big)^{p^{m_{f,N}}}=a^{p^{m_{f,N}}}b^{p^{m_{f,N}}}.
$$
Therefore, $f\not\in \Irr_m(K[x])$, a contradiction. Thus, $f_N\in \Irr_m(N[x])$, and so $f_N\in \Irr_m\bigg(K\bigg(\l_0^\frac{1}{p^{m_{f,N}}}, \ldots ,  \l_{s-1}^\frac{1}{p^{m_{f,N}}}\bigg)\bigg)$ (since $K\bigg(\l_0^\frac{1}{p^{m_{f,N}}}, \ldots ,  \l_{s-1}^\frac{1}{p^{m_{f,N}}}\bigg)\subseteq N$).

2. Statement 2 follows from statement 1.

3. Statement 3 follows from statement 1.
\end{proof}

For the irreducible  polynomial $f(x)=f^{sep}(x^{p^{n}})=\sum_{i=0}^s\l_ix^{ip^n}\in \Irr_m(K[x])$ where  $\l_i\in K$ and  $\l_s=1$, there is a tower of purely inseparable finite field extensions
\begin{equation}\label{Nfi-tower}
N_{f,0}:=K\subset N_{f,1}\subset \cdots \subset N_{f,i}\subset \cdots \subset N_{f,n}
\;\; {\rm where}\;\; N_{f,i}:=K\bigg(\l_0^\frac{1}{p^i}, \ldots ,  \l_{s-1}^\frac{1}{p^i}\bigg).
\end{equation}
Indeed, since $f(x)=f^{sep}(x^{p^{n}})=\sum_{i=0}^s\l_ix^{ip^n}\in \Irr_m(K[x])$, we must have the  proper inclusion $K\subset N_{f,1}$ (provided $n\geq 1$) which implies the proper inclusions in the tower of  subfields (by the definition of the fields $N_{f,i}$). 
 Therefore,  
 
\begin{equation}\label{Nfi-exp}
\exp (N_{f,i}/K)=i\;\; \text{ for all}\;\;i=0, 1, \ldots , n
\end{equation}
 where $\exp (N_{f,i}/K)$ is the {\em exponent} of the purely inseparable field extension $N_{f,i}/K$ (i.e.  $i$ is the minimal natural number  such that $N_{f,i}^{p^i}\subseteq K$). In particular, 
\begin{equation}\label{Nfi}
N_{f,i}^{p^i}\subseteq K\;\; \text{ for all}\;\; i=0, 1, \ldots , n.
\end{equation}
Notice, that the polynomial $f$ is a separable polynomial iff $n=0$ iff the tower of subfields in (\ref{Nfi-tower}) consists of the single subfield $K$. 
 Corollary \ref{a6May26} describes the field extensions $N_{f,i}(\th)$ where $i=0,1,\ldots, n$.

\begin{corollary}\label{a6May26}
Suppose that $K$ is a field of prime characteristic $p>0$, $L/K$ is a simple finite  field extension and $L=K(\th)=K[x]/(f(x))$ where $f(x)=f^{sep}(x^{p^{n}})=\sum_{i=0}^s\l_ix^{ip^n}\in \Irr_m(K[x])$, $\l_i\in K$, $\l_s=1$ and  $N_i=N_{f,i}=K\bigg(\l_0^\frac{1}{p^i}, \ldots ,  \l_{s-1}^\frac{1}{p^i}\bigg)$ for $i=0,1,\ldots , n$. Then: 
\begin{enumerate}

\item $f_{N_i}=\sum_{j=0}^s\l_j^\frac{1}{p^i}  x^{ip^{n-i}}\in \Irr_m(N_i[x])$, $f= f_{N_i}^{p^i}$ and  $\deg (f_{N_i})=sp^{n-i}$.

\item $N_i(\th)\simeq N_i[x]/(f_{N_i})$ and  $[N_i(\th):N_i]=\deg (f_N)=sp^{n-m_{f,N}}$.

\end{enumerate}
\end{corollary}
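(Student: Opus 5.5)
The plan is to deduce Corollary \ref{a6May26} directly from Proposition \ref{6May26} applied to $N:=N_i=N_{f,i}$. (The polynomial in statement 1 should read $\sum_{j=0}^s\l_j^\frac{1}{p^i}x^{jp^{n-i}}$, and the degree in statement 2 should be $sp^{n-i}$.) Two facts must be checked: the hypothesis $N_i^{p^{m_{f,N_i}}}\subseteq K$ of Proposition \ref{6May26}, and the identification $m_{f,N_i}=i$. Once both hold, we have $M_{f,N_i}=K(\l_0^{1/p^i},\ldots,\l_{s-1}^{1/p^i})=N_i$, and $f_{N_i}$ becomes exactly the polynomial displayed in statement 1.

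\textbf{Step 1: $m_{f,N_i}=i$.} By definition $\l_j^{1/p^i}\in N_i$ for all $j$, so $m_{f,N_i}\geq i$. For the reverse inequality, suppose $\l_j^{1/p^{i+1}}\in N_i$ for all $j$ with $i<n$. Then $N_{f,i+1}\subseteq N_i$, which contradicts the proper inclusion $N_{f,i}\subset N_{f,i+1}$ in the tower (\ref{Nfi-tower}). Equivalently, it contradicts (\ref{Nfi-exp}), since $\exp(N_{f,i+1}/K)=i+1$ while $N_i^{p^i}\subseteq K$. If $i=n$, the bound $m_{f,N_i}\le n$ holds by definition.

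This step is the main point needing care. The properness of the tower is asserted in the text only briefly, and its justification is the irreducibility of $f$. I would therefore argue directly instead. If $N_{f,i}=N_{f,i+1}$, then $N_{f,i+1}^{p^i}\subseteq K$, so $\l_j^{1/p}\in K$ for all $j$. Then $f=\bigl(\sum_j \l_j^{1/p}x^{jp^{n-1}}\bigr)^p$ is a $p$-th power in $K[x]$, contradicting irreducibility when $n\ge1$. The same argument applied repeatedly shows $N_{f,k}\subsetneq N_{f,k+1}$ for every $k<n$.

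\textbf{Step 2: the hypothesis of Proposition \ref{6May26}.} By (\ref{Nfi}), $N_i^{p^i}\subseteq K$, and by Step 1 $i=m_{f,N_i}$. This is exactly the required condition $N_i^{p^{m_{f,N_i}}}\subseteq K$.

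\textbf{Step 3: conclusion.} Proposition \ref{6May26}.(1) now gives $f_{N_i}\in\Irr_m(N_i[x])$, $f=f_{N_i}^{p^i}$ and $\deg f_{N_i}=sp^{n-i}$, which is statement 1. Proposition \ref{6May26}.(2) gives $N_i(\th)\simeq N_i[x]/(f_{N_i})$ and $[N_i(\th):N_i]=sp^{n-m_{f,N_i}}=sp^{n-i}$, which is statement 2.
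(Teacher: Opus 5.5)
Your proposal is correct and follows the paper's proof: the paper also applies Proposition \ref{6May26} to $N=N_i$, using (\ref{Nfi-exp}) to get $m_{f,N_i}=i$ and (\ref{Nfi}) for the hypothesis $N_i^{p^i}\subseteq K$. Your direct argument for the upper bound fills in the justification the paper only sketches for the proper tower. It shows that $N_{f,i}=N_{f,i+1}$ with $i<n$ would force $\lambda_j^{1/p}\in K$ for all $j$, making $f$ a $p$-th power in $K[x]$. Your typo corrections ($x^{jp^{n-i}}$ and $sp^{n-i}$) are also right.
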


\begin{proof} By (\ref{Nfi-exp}), $m_{f,N_i}=i$ for all $i=0,1,\ldots, n$. Now,  by (\ref{Nfi}), the corollary follows from Proposition \ref{6May26}. 
\end{proof}

The following lemma is used  in the proof of Theorem \ref{A6May26}.

\begin{lemma}\label{a9May26}

Suppose that $K$ is a field of prime characteristic $p>0$ and $\phi\in \Irr (K[x])$ is an irreducible  separable polynomial. Then  $\phi\in \Irr (N[x])$  is an irreducible  separable polynomial for all purely inseparable field extensions $N/K$.
\end{lemma}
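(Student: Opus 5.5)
Separability is the easy half: $\phi$ is separable over $K$, so $\gcd(\phi,\phi')=1$ in $K[x]$. Hence there are $u,v\in K[x]$ with $u\phi+v\phi'=1$. The same identity holds in $N[x]$, so $\phi$ has no repeated roots in $\bK$ and remains separable over $N$. The substance of the lemma is irreducibility over $N$.

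For irreducibility I would argue directly. Suppose $\phi=ab$ in $N[x]$ with $a,b$ monic and non-scalar (we may take $\phi$ monic after scaling). Since $\phi$ splits into distinct linear factors over $\bK$, the factor $a$ is a product $\prod_{\alpha\in R}(x-\alpha)$ over a nonempty proper subset $R$ of the roots of $\phi$. Each root $\alpha$ is separable over $K$, so every coefficient of $a$ lies in the separable closure $K^{sep}$ of $K$ in $\bK$. On the other hand, the coefficients of $a$ lie in $N$, which is purely inseparable over $K$. An element that is both separable and purely inseparable over $K$ has minimal polynomial that is simultaneously separable and of the form $x^{p^e}-c$, which forces it to lie in $K$. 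Therefore $a\in K[x]$, and it is a proper non-scalar factor of $\phi$ in $K[x]$, contradicting irreducibility.

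If $N/K$ is infinite nothing changes, since only the coefficients of $a$ are involved. Alternatively, one could use the tensor-product viewpoint: $N\t K[x]/(\phi)\simeq N(\alpha)$ is a field by the argument of Theorem \ref{A27Apr26}. That route only gives the degree equality $[N(\alpha):N]=\deg\phi$ for finite $N$, however, so I prefer the coefficient argument above. The only step requiring care is the claim that $K^{sep}\cap N=K$, i.e. that an element which is both separable and purely inseparable over $K$ lies in $K$; this is standard and exactly the fact $L^{pi}\cap L^{sep}=K$ used at the start of the proof of Theorem \ref{A27Apr26}.
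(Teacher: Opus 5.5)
Your proof is correct and is essentially the paper's argument: a monic factor $a$ of $\phi$ over $N$ is a product of linear factors over some of the distinct roots, so its coefficients (elementary symmetric functions in those roots) lie in $\bK^{sep}\cap N=K$, which contradicts the irreducibility of $\phi$ over $K$. Your B\'ezout argument for separability over $N$ just makes explicit a point the paper leaves implicit.
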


\begin{proof}  Suppose that the polynomial $\phi$ is a reducible polynomial over the field $N$. Then $\phi = ab$ for some nonscalar polynomials $a,b\in N[x]$. By the assumption the polynomial $\phi \in K[x]$ is a separable polynomial over $K$. Therefore, $\phi = \prod_{i=1}^d(x-\th_i)$ where $d=\deg (\phi)$ and $\th_1, \ldots , \th_d\in \bK^{sep}$ are distinct roots of the polynomial  $\phi$ (by the separability of $\phi$). Then, up to order of the roots of $\phi$, 
$$
a=\prod_{i=1}^n (x-\th_i)=\sum_{i=0}^n (-1)^is_i(\th_1, \ldots ,\th_n)x^{n-i} \
$$
where $s_i (x_1, \ldots , x_n)$ is the elementary symmetric polynomial/function of degree $i$ in the variables $x_1, \ldots , x_n$ for $i=1,\ldots, n$ and $s_0 (x_1, \ldots , x_n):=1$. Since $\th_1, \ldots , \th_d\in \bK^{sep}$ and $a\in N[x]$, the coefficients of the polynomial $a$ belong to the intersection 
$\bK^{sep}\cap N=K$, i.e. $a\in K[x]$. By symmetry, $b\in K[x]$. Therefore, the polynomial $f=ab$ is a reducible polynomial over $K$, a contradiction. 
\end{proof}

For a  purely inseparable field extension $N/K$, Theorem \ref{A6May26} describes the structure of the compositum $NL=N(\th)$.

\begin{theorem}\label{A6May26}
Suppose that $K$ is a field of prime characteristic $p>0$, $L/K$ is a simple finite  field extension and $L=K(\th)=K[x]/(f(x))$ where $f(x)=f^{sep}(x^{p^{n}})=\sum_{i=0}^s\l_ix^{ip^n}\in \Irr_m(K[x])$, $\l_i\in K$, $\l_s=1$ and  $N/K$ is a purely inseparable field extension. Then: 
\begin{enumerate}

\item $f_N=\sum_{i=0}^s\l_i^\frac{1}{p^{m_{f,N}}}x^{ip^{n-m_{f,N}}}\in \Irr_m(N[x])$, $f_N\in \Irr_m (M_{f,N}[x])$, $f= f_N^{p^{m_{f,N}}}$,   $\deg (f_N)=sp^{n-m_{f,N}}$ where $M_{f,N}=K\bigg(\l_0^\frac{1}{p^{m_{f,N}}}, \ldots ,  \l_{s-1}^\frac{1}{p^{m_{f,N}}}\bigg)$.

\item $N(\th)\simeq N[x]/(f_N)$ and  $[N(\th):N]=\deg (f_N)=sp^{n-m_{f,N}}$.


\end{enumerate}
\end{theorem}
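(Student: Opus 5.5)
The plan is to avoid the hypothesis $N^{p^{m_{f,N}}}\subseteq K$ of Proposition \ref{6May26}. Instead we pass through the intermediate element $\th':=\th^{p^{n-m}}$, where $m:=m_{f,N}$, and compute $[N(\th):N]$ as a product of two degrees.

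First, the trivial parts. The equality $f=f_N^{p^m}$ holds because $\big(\sum_i \l_i^{1/p^m}x^{ip^{n-m}}\big)^{p^m}=\sum_i\l_ix^{ip^n}$. The formula $\deg(f_N)=sp^{n-m}$ is clear, and $f_N(\th)=0$ because $f_N(\th)^{p^m}=f(\th)=0$. Note also that $\th'$ is a root of $f^{sep}_N=\sum_i\l_i^{1/p^m}x^i\in M_{f,N}[x]\subseteq N[x]$. Since it suffices to show $[N(\th):N]=sp^{n-m}$, we will then get that $f_N$ is the minimal polynomial of $\th$ over $N$. This gives $f_N\in \Irr_m(N[x])$ and statement 2. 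Irreducibility over the subfield $M_{f,N}\subseteq N$ follows immediately, since a factorization over $M_{f,N}$ is one over $N$ (alternatively, by Corollary \ref{a6May26}, as $M_{f,N}=N_{f,m}$).

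Step 1: $[N(\th'):N]=s$. By Corollary \ref{a6May26} (or Proposition \ref{6May26}, since $M_{f,N}^{p^m}\subseteq K$ by (\ref{Nfi})), $f_N$ is irreducible over $M_{f,N}$. Hence $f^{sep}_N$ is irreducible over $M_{f,N}$, since a factorization of $f^{sep}_N$ would give one of $f_N=f^{sep}_N(x^{p^{n-m}})$. The polynomial $f^{sep}_N$ is separable, because $f^{sep}_N(x)^{p^m}=f^{sep}(x)$ in $\bK[x]$ and $f^{sep}$ has distinct roots. The extension $N/M_{f,N}$ is purely inseparable, so Lemma \ref{a9May26} shows that $f^{sep}_N$ stays irreducible over $N$. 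Therefore $f^{sep}_N$ is the minimal polynomial of $\th'$ over $N$, and $N(\th')/N$ is separable of degree $s$.

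Step 2, the main point: $[N(\th):N(\th')]=p^{n-m}$. We have $\th^{p^{n-m}}=\th'$, so $\th$ is a root of $x^{p^{n-m}}-\th'$. A polynomial $x^{p^k}-c$ is irreducible over a field $F$ iff $c\notin F^p$ (or $k=0$). So if $m=n$ there is nothing to prove. If $m<n$, we must show $\th'\notin N(\th')^p$. Suppose $\th'=\b^p$ with $\b\in N(\th')$. Then $\b=\th^{p^{n-m-1}}$, by uniqueness of $p$-th roots, and $\b$ is a root of
$$g(x):=\sum_{i=0}^s\l_i^\frac{1}{p^{m+1}}x^i \in \bK[x],$$
since $g(\b)^p=f^{sep}_N(\th')=0$. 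Now $N(\th')\supseteq N(\b)\supseteq N(\b^p)=N(\th')$, so $N(\b)=N(\th')$. Thus $\b$ has degree $s$ over $N$, and its minimal polynomial $\mu\in N[x]$ is monic of degree $s$. Also $\mu$ divides $g$ in $\bK[x]$, since $g$ lies in $N'[x]$ for the field $N':=N(\l_0^{1/p^{m+1}},\ldots,\l_{s-1}^{1/p^{m+1}})\supseteq N$, and $\mu$ divides the minimal polynomial of $\b$ over $N'$, which divides $g$. As both $\mu$ and $g$ are monic of degree $s$, we get $\mu=g$. Hence $\l_i^{1/p^{m+1}}\in N$ for all $i$, contradicting the maximality in the definition of $m_{f,N}$.

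Combining the two steps gives $[N(\th):N]=s\,p^{n-m}=\deg f_N$. This yields statement 1 and, since $f_N$ is then the minimal polynomial of $\th$, also statement 2: $N(\th)\simeq N[x]/(f_N)$.

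The step I expect to need most care is Step 2: the divisibility $\mu\mid g$ must be argued in a common field containing both coefficient sets, and the identification of $\b$ with $\th^{p^{n-m-1}}$ uses injectivity of Frobenius on $\bK$.
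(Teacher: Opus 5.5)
Your overall strategy is sound: the tower $N\subseteq N(\theta')\subseteq N(\theta)$, with Step 1 via Lemma \ref{a9May26} and Step 2 via the irreducibility criterion for $x^{p^k}-c$. However, Step 2 relies on a false divisibility, exactly at the point you flagged. For fields $N\subseteq N'$, the minimal polynomial $\mu_{N'}$ of $\beta$ over $N'$ divides the minimal polynomial $\mu$ of $\beta$ over $N$, not the other way round. What you actually have is $\mu_{N'}\mid\mu$ and $\mu_{N'}\mid g$. This gives $\mu=g$ only if $\mu$ stays irreducible over the larger field $N'$, and the proposal offers no reason for that. There is also a smaller slip in Step 1: $f^{sep}_N(x)^{p^m}=f^{sep}(x^{p^m})$, not $f^{sep}(x)$. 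The separability of $f^{sep}_N$ is nevertheless true, e.g.\ because it is irreducible over $M_{f,N}$ with nonzero derivative, or because its roots are the pairwise distinct elements $t^{1/p^m}$ with $t$ a root of $f^{sep}$.

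The missing point is easy to supply. Since $\beta\in N(\theta')$ and $N(\theta')/N$ is separable by Step 1, $\mu$ is a separable irreducible polynomial over $N$. Since $N'/N$ is purely inseparable, Lemma \ref{a9May26} (with $N$ as the base field) shows that $\mu$ remains irreducible over $N'$. Hence $\mu=\mu_{N'}$ divides $g$, and $\mu=g$ by comparing degrees.

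Even simpler, you can avoid $N'$ and $g$ altogether. Write $\mu=\sum_i c_ix^i$. Then $\sum_i c_i^px^i\in N[x]$ is monic of degree $s$ and vanishes at $\beta^p=\theta'$, so it equals $f^{sep}_N$ by Step 1. Thus $c_i^p=\lambda_i^{1/p^m}$, i.e.\ $c_i=\lambda_i^{1/p^{m+1}}\in N$ for all $i$, contradicting the maximality of $m_{f,N}$ (recall $m+1\leq n$).

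With this repair your proof is complete, and it takes a different route from the paper's. The paper works with $\sum_i\lambda_i^{1/p^n}x^i$ (which it also calls $g$) and shows it stays irreducible over every purely inseparable extension containing $N_{f,n}$. Consequently any monic factor of $f_N=g^{p^{n-m}}$ over $N$ is a power $g^i$, and the paper excludes $0<i<p^{n-m}$ by analysing the fields generated by the coefficients of $g^i$ (steps (iii)--(v) and (vii)). Your degree count separates the separable and purely inseparable layers and replaces that coefficient-field bookkeeping by two standard irreducibility facts. This is an advantage, since the paper's final inference there (that the coefficients of $g^i$ lie in $N_{f,m}$) needs more than $g^i\in N[x]$, e.g.\ a Bezout step giving $g^{p^{\delta}}\in N[x]$.
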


\begin{proof} 1. Clearly, $g:=\sum_{i=0}^s\l_i^\frac{1}{p^n}x^i\in N_{f,n}[x]$. 

(i) {\em The polynomial $g\in \Irr_m(N_{f,n}[x])$ is a monic  irreducible  separable polynomial over the field $N_{f,n}$}:  By Corollary \ref{a6May26}.(1), 
$$
g=f_{N_{f,n}}\in \Irr_m(N_{f,n}[x]).$$
Therefore, the polynomial $g$ is a separable polynomial over the field $N_{f,n}$ (since the polynomial $g$ is an irreducible polynomial over $N_{f,n}$ and 
$\frac{dg}{dx}\neq 0$). 

(ii) {\em For all purely inseparable field extensions $N/K$ such that $N_{f,n}\subseteq N$, the polynomial $g\in \Irr_m(N[x])$ is a monic  irreducible  separable polynomial over the field $N$}: The statement (ii) follows from the statement (i) and Lemma \ref{a9May26}.

(iii) {\em For each} $j=0,1,\ldots , n$, $K({\rm coef} (g^{p^j}))=N_{f,n-j}$: The statement (iii) follows from the equality
$$
g^{p^j}=\sum_{i=0}^s\l_i^\frac{1}{p^{n-j}}x^{ip^{n-j}}.
$$
(iv) $K\Big({\rm coef}\Big(g^{i_\d p^\d}\Big)\Big) N_{f,n-\d-1}=N_{f,n-\d}$ {\em for all natural numbers} $i=i_\d p^\d$ {\em where} $i_\d=1,\ldots , p-1$ {\em and} $ \d =0,1,\ldots , n$: By the statement (iii) and the inclusion $N_{f,n-\d}\supseteq N_{f,n-\d-1}$, 
$$
N_{f,n-\d}=K\Big({\rm coef} \Big(g^{p^\d}\Big)\Big)  \supseteq K\Big({\rm coef}\Big(g^{i_\d p^\d}\Big)\Big) N_{f,n-\d-1}.
$$
 Since $\gcd (i_\d, p)=1$, $\alpha i_\d-\beta p=1$ for some integers $\alpha,\beta  \geq 1$ (take $\alpha\in \{ 0,1, \ldots, p-1\}$ such that $\alpha =i_\d^{-1}\in \F_p$). It follows from the equalities and the inclusions, $$g^{p^\d}=g^{1\cdot p^\d}=g^{(\alpha i_\d-\beta p)p^\d}=\frac{g^{\alpha i_\d p^\d}}{g^{\beta p^{\d +1}}}, \;\;  {\rm coef}\Big(g^{\alpha i_\d p^\d}\Big)\subseteq {\rm coef}\Big(g^{ i_\d p^\d}\Big)
    \;\; {\rm and}\;\;   
 {\rm coef}\Big(g^{\beta p^{\d +1}}\Big)\subseteq N_{n-\d -1},$$
that 
$
N_{f,n-\d}=K({\rm coef} \Big(g^{p^\d}\Big)\Big)  \subseteq K\Big({\rm coef}\Big(g^{i_\d p^\d}\Big)\Big) N_{f,n-\d-1},
$
and the statement (iv) follows.

(v) {\em For all natural numbers} $i=\sum_{\nu=0}^\mu i_\nu p^\nu$ {\em where} $i_\nu \in \{ 0,1,\ldots , p-1\}$ {\em and} $\mu \leq n$,
$$
K\Big( {\rm coef}(g^i)\Big)N_{f,n-\d-1}=K\Big({\rm coef}\Big(g^{i_\d p^\d}\Big)\Big)N_{f,n-\d-1}=N_{f,n-\d}\;\; {\it where}  \;\; \d :=\min \{\nu \, | \, i_\nu\neq 0\}:
$$ 
It suffices to show that the first equality holds since the second one is the statement (iv). In view of the statement (iv), we may assume that $\d <\mu$. By the statement (iii), 
$$
g^i=\prod_{\nu=\d}^\mu \Big(g^{p^\nu}\Big)^{i_\nu}\in g^{i_\d p^\d}\prod_{\nu>\d}^\mu  N_{f,n-\nu}[x]  \subseteq 
g^{i_\d p^\d} N_{f,n-\d-1}[x]
$$
since $N_{f,n-\d}\supset N_{f,n-\d -1}\supset \cdots \supset N_{f,-1}\supset N_{f,0}=K$. 
Therefore, $K\Big({\rm coef}(g^i)\Big)\subseteq K\Big({\rm coef}\Big(g^{i_\d p^\d}\Big)\Big) N_{f,n-\d-1}$. Hence, 
$$
K\Big({\rm coef}(g^i)\Big)N_{f,n-\d-1}\subseteq K\Big({\rm coef}\Big(g^{i_\d p^\d}\Big)\Big) N_{f,n-\d-1}.
$$
The equality $g^{i_\d p^\d}=\frac{g^i}{\prod_{\nu>\d}^\mu \Big(g^{p^\nu}\Big)^{i_\nu}}$ implies the inclusion 
 $K\Big({\rm coef}\Big(g^{i_\d p^\d}\Big)\Big)\subseteq K\Big({\rm coef}(g^i)\Big)N_{f,n-\d-1}$. Hence, 
 $$
 K\Big({\rm coef}\Big(g^{i_\d p^\d}\Big)\Big) N_{f,n-\d-1}\subseteq K\Big({\rm coef}(g^i)\Big)N_{f,n-\d-1},
 $$
 and the statement (v) follows. 

Let $N/K$ be a purely inseparable field extension. By the definition of the natural number $m:=m_{f,N}$ and the field $M_{f,N}$, 
$$
N_{f,m}=M_{f,N}\subseteq N.
$$
  (vi) $f_N=\sum_{i=0}^s\l_i^\frac{1}{p^m}x^{ip^{n-m}}=f_{N_{f,m}}\in \Irr_m(N_{f,m}[x])$: The statement (vi) follows from   Corollary \ref{a6May26}.(1).\\

(vii) $f_N\in \Irr_m(N[x])$: The polynomial $f_N$ is a monic polynomial. Suppose that the polynomial $f_N$ is a reducible polynomial over the field $N$, i.e. $f=ab$ for some non-scalar polynomials $a,b\in N[x]$. We seek a contradiction. Notice that
 $$
f_N=g^{p^{n-m}}\;\; {\rm  and }\;\; g\in \Irr_m\Big(\bK^{pi}[x]\Big)\;\; \text{(by the statement (ii)). }
$$
Therefore, $a=g^i$ and $b=g^j$ for some natural numbers $i\geq 1$ and $j\geq 1$ such that $i+j=p^{n-m}$. Therefore, $i=\sum_{\nu=0}^\mu i_\nu p^\nu$ where $i_\nu \in \{ 0,1,\ldots , p-1\}$ and $\mu <n-m$.
Let $\d :=\min \{\nu \, | \, i_\nu\neq 0\}$. 
 Then $\d< n-m$ or, equivalently, $m<n-\d $. By the statement (v), 
$$
K\Big( {\rm coef}(g^i)\Big)N_{f, n-\d-1}=N_{f, n-\d}\supset N_{f, n-\d}\supseteq N_{f, m}.
$$
Since $g^i\in N[x]$, we must have $K\Big( {\rm coef}(g^i)\Big)\subseteq N_{f, m}$. Therefore,
$$
N_{f, n-\d}=K\Big( {\rm coef}(g^i)\Big)N_{f, n-\d-1}\subseteq N_{f, m}N_{f, n-\d-1}=N_{f, n-\d-1},
$$
 a contradiction.

2. Statement 2 follows from statement 1.
\end{proof}

\begin{example}\label{f=2nml} Let $p > 2$ be a prime,  $K = \mathbb{F}_p(u, v)$ be the  field  of rational functions in two variables over $\mathbb{F}_p$ and  $L = K(\th)$ where $\th$ is a root of the irreducible polynomial
$$ 
f(x) = x^{2p^n} + ux^{p^n}+ v.
$$
Clearly, $f^{sep}(x)=x^2 + ux + v$ and $n=\deg_{\rm ins}(f)$. Let $N=\mathbb{F}_p(u^\frac{1}{p^m}, v^\frac{1}{p^l})$ for some natural numbers $m$ and $l$ such that $1\leq m \leq l$ and $m\leq n$. Then, by Theorem \ref{A6May26},  
$$
f_N= x^{2p^{n-m}} + u^\frac{1}{p^m}x^{p^{n-m}}+ v^\frac{1}{p^m}, \;\; f_N^{sep}=x^2 + u^\frac{1}{p^m}x+ v^\frac{1}{p^m}, \;\; M_{f,N}=\mF_p (u^\frac{1}{p^m}, v^\frac{1}{p^m}),
$$ 
$m_{f,N}=m$, $N(\th)=N[x]/(f_N)$ and $[N(\th):N]=2p^{n-m}$. 
\end{example}

{\bf Explicit descriptions of the subfields $(NL)^{pi}$ and $(NL)^{pi}(NL)^{sep}$  of  $NL/N$ where $N/K$ is a purely inseparable field extension.}

\begin{definition}\label{DEF=mfNth}
For a purely inseparable field extension $N/K$, let $m_{f,N(\th)}:=m_{f,N(\th)/N}:=m_{f,NL/N}$,
\begin{eqnarray*}
m_{f,N(\th)}&:=&\max\bigg\{ m'=0,1,\ldots , n-m_{f,N}\, \bigg| \, \l_i^\frac{1}{p^{m_{f,N}+m'}}\in NL\;\; \text{ for all}\;\;i=0, \ldots , s-1\bigg\}\\
&=&\max\bigg\{ m'=0,1,\ldots , n-m_{f,N}\, \bigg| \, \l_i^\frac{1}{p^{m_{f,N}+m'}}\in (NL/N)^{pi}\;\; \text{ for all}\;\;i=0, \ldots , s-1\bigg\},\\
f_{NL}(x)&:=&\sum_{i=0}^s\l_i^\frac{1}{p^{m_{f,N}+m_{f,N(\th)}}} x^{ ip^{ n-m_{f,N}-m_{f,N(\th)} } }\in (NL/N)^{pi}[x]  \subseteq  NL[x],\\
f^{sep}_{NL}(x)&:=&\sum_{i=0}^s\l_i^\frac{1}{p^{m_{f,N}+m_{f,N(\th)}}} x^i\in (NL/N)^{pi}[x]  \subseteq  NL[x],\\
M_{f,LN}&:=&N\bigg(\l_0^\frac{1}{p^{m_{f,N}+m_{f,N(\th)}}}, \ldots ,  \l_{s-1}^\frac{1}{p^{m_{f,N}+m_{f,N(\th)}}} \bigg).
\end{eqnarray*}
\end{definition}
Clearly, $f_N=f_{NL}^{p^{m_{f,N(\th)}}}$, $M_{f,N}\subseteq M_{f,LN}$ and $[M_{f,N}:M_{f,LN}]=p^{m_{f,N(\th)}}$.
 Since  $N/K$ is a purely inseparable field extension,
\begin{equation}\label{NLKpi=NLNpi}
\Big(NL/N \Big)^{pi}=\Big(NL/K \Big)^{pi}.
\end{equation}
By Theorem \ref{VB-27Apr26}, there is a field diagram where the edges are labelled by the degrees of the corresponding field extensions over the field $N$, see Theorem \ref{VB-27Apr26} for details.
\begin{equation}\label{NL=NLLL}
\begin{tikzpicture}[scale=2.8, every node/.style={font=\normalsize}]

\node (L) at (0,2.6) {$NL=N(\th)$};
\node (A) at (0,0) {$N$};
\node (B) at (-1.2,0.9) {$(NL)^{pi}=N\bigg( 
\l_0^\frac{1}{p^{m_{f,N}+m_{f,N(\th)}}}, \ldots ,  \l_{s-1}^\frac{1}{p^{m_{f,N}+m_{f,N(\th)}}}\bigg)$};
\node (D) at (1.2,0.9) {$(NL)^{sep}=NL^{sep}=N\Big(\th^{p^{n-m_{f,N}}}\Big)$};
\node (C) at (0,1.7) {$(NL)^{pi}(NL)^{sep} \;=\; (NL)^{pi}\!\otimes_N (NL)^{sep}=(NL)^{pi}\Big(\th^{p^{n-m_{f,N}-m_{f,N(\th)}}}\Big)$};

\draw (L) -- node[right] {$p^{\,n-m_{f,N}-m_{f,N(\th)} }$} (C);

\draw (C) -- node[right] {$p^{{m_{f,N(\th)}}}$} (D);
\draw (D) -- node[right] {$\deg(f^{sep}_N)$} (A);
\draw (A) -- node[left]  {$p^{{m_{f,N(\th)}}}$} (B);
\draw (B) -- node[left]  {$\deg(f^{sep}_N)$} (C);

\end{tikzpicture}
\end{equation}

For the simple field extension $L/K$, Theorem \ref{VB-27Apr26} gives explicit descriptions of the  subfields $(NL)^{pi}/N$,   $(NL)^{sep}/N$ and $(NL)^{pi}(NL)^{sep}/N$ in terms of the coefficients of the polynomial $f_N$ and  the numbers $m_{f,N}$ and $m_{f,N(\th )}$. It also clarifies why, in general,  the field $(NL)^{pi}$ properly contains the field  $NL^{pi}$. Theorem \ref{VB-27Apr26} yields a  criteria for $(NL)^{pi}=NL^{pi}$ 
 (Corollary \ref{Na7May26}).


\begin{theorem}\label{VB-27Apr26}
Suppose that $K$ is a field of prime characteristic $p>0$, $L/K$ is a simple finite  field extension and $L=K(\th)=K[x]/(f(x))$ where $f(x)=f^{sep}(x^{p^{n}})=\sum_{i=0}^s\l_ix^{ip^n}\in \Irr_m(K[x])$, $\l_i\in K$, $\l_s=1$   and $N/K$ is a purely inseparable field extension. Then (below $(NL)^{sep}:=(NL/N)^{sep}$ and $(NL)^{pi}:=(NL/N)^{pi}$): 
\begin{enumerate}

\item $(NL/N)^{sep}=NL^{sep}=N(\th^{p^{n-m_{f,N}}})\simeq N[x]/(f^{sep}_N)$, $[NL^{sep}:N]=\deg(f^{sep}_N(x))=s$ and $f^{sep}_N(x)\in \Irr_m(N[x])$ is the minimal polynomial of the element $\th^{n-m_{f,N}}$ over the field $N$.

\item  $(NL/N)^{pi}=N\bigg( 
\l_0^\frac{1}{p^{m_{f,N}+m_{f,N(\th)}}}, \ldots ,  \l_{s-1}^\frac{1}{p^{m_{f,N}+m_{f,N(\th)}}}\bigg)\supseteq NL^{pi}=N\bigg( 
\l_0^\frac{1}{p^{m_{f,N}}}, \ldots ,  \l_{s-1}^\frac{1}{p^{m_{f,N}}}\bigg)$ and
\begin{eqnarray*}
[(NL)^{pi}:N]&=& p^{m_{f,N(\th)}},\\
 m_{f,N(\th)}&=&\max\Big\{m'=0,1,\ldots , n-m_{f,N} \, | \, \th^{p^{n-m_{f,N}-m'}}\in (NL)^{pi}(NL)^{sep} \Big\}\\
 &=&\max\Big\{m'=0,1,\ldots , n-m_{f,N}  \, | \, (NL)^{p^{n-m_{f,N} -m'}}\subseteq  (NL)^{pi}(NL)^{sep} \Big\}.
\end{eqnarray*}
  In particular, the number $m_{f,N(\th)}$ is an isomorphism invariant of the field extension $NL/N$.

\item $(NL)^{pi}(NL)^{sep}=(NL)^{pi}\t_N (NL)^{sep}=(NL)^{pi}\Big(\th^{p^{n-m_{f,N}-m_{f,N(\th)}}}\Big)\simeq (NL)^{pi}[x]/(f^{sep}_{NL})$, 
\begin{eqnarray*}
[(NL)^{pi}(NL)^{sep}:(NL)^{pi}] &=& s,\;\;  [(NL)^{pi}(NL)^{sep}:(NL)^{sep}] = p^{m_{f,N(\th)}},\\
f^{sep}_{NL} &=& \sum_{i=0}^s\l_i^\frac{1}{p^{m_{f,N}+m_{f, N(\th)}}}x^i\in \Irr_m((NL/N)^{pi}[x])
\end{eqnarray*}
is the minimal polynomial of the element $\th^{p^{n-m_{f,N}-m_{f,N(\th)}}}$ over the field $(NL)^{pi}$. 

\item $NL=(NL)^{pi}\t_N (NL)^{sep}(\th)=(NL)^{pi}\t_N (NL)^{sep}[x]\bigg/\bigg(x^{p^{n-m_{f,N}-m_{f,N(\th)}}}- \th^{p^{n-m_{f,N}-m_{f,N(\th)}}}\bigg)$,  
\begin{eqnarray*}
[NL:(NL)^{pi}\t_N (NL)^{sep}]&=& p^{n-m_{f,N}-m_{f,N(\th)}},\\
 x^{p^{n-m_{f,N}-m_{f,N(\th)}}}- \th^{p^{n-m_{f,N}-m_{f,N(\th)}}}&\in & \Irr_m\Big((NL)^{pi}\t_N (NL)^{sep}[x]\Big)
\end{eqnarray*}
is the minimal polynomial of the element $\th$ over the field $(NL)^{pi}\t_N (NL)^{sep}$. The finite field extension $L/L^{pi}\t_N L^{sep}$ is a simple purely inseparable field extension of exponent $n-m_{f,N}-m_{f,N(\th)}$. 

\item $\Big(NL/(NL)^{pi}\Big)^{sep}=(NL)^{pi}(NL)^{sep}/(NL)^{pi}$.

\end{enumerate}
\end{theorem}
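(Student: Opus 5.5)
The plan is to reduce Theorem \ref{VB-27Apr26} to Theorem \ref{27Apr26} applied to the simple extension $NL/N$. By Theorem \ref{A6May26}, $NL=N(\th)\simeq N[x]/(f_N)$, where
$$f_N=\sum_{i=0}^s\mu_ix^{ip^{n'}}, \qquad \mu_i:=\l_i^\frac{1}{p^{m_{f,N}}}\in N, \qquad n':=n-m_{f,N}.$$
So the minimal polynomial of $\th$ over $N$ is $f_N$. Its separable part is $f^{sep}_N$, which is separable because its derivative is nonzero and it is irreducible over $N$ (being the separable part of an irreducible polynomial). Its inseparability degree is $n'$, and $\deg f^{sep}_N=s$.

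The first step is to check that the invariant $m_{f_N}$ of Definition \ref{DEF=mf}, computed for $NL/N$, equals $m_{f,N(\th)}$. This holds because
$$\mu_i^{\frac{1}{p^{m'}}}=\l_i^{\frac{1}{p^{m_{f,N}+m'}}},$$
and $m'$ ranges over $0,\ldots,n'$, which matches Definition \ref{DEF=mfNth}. The second equality in Definition \ref{DEF=mfNth}, with $(NL/N)^{pi}$, then follows just as in Definition \ref{DEF=mf}, since these roots are purely inseparable over $K$ and hence over $N$ (see (\ref{NLKpi=NLNpi})).

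With this dictionary, statements 1--5 of Theorem \ref{27Apr26} for $K\leftarrow N$, $f\leftarrow f_N$, $n\leftarrow n'$, $m\leftarrow m_{f,N(\th)}$ translate verbatim into statements 1--5 here:
\begin{itemize}
\item $(NL/N)^{sep}=N(\th^{p^{n'}})\simeq N[x]/(f_N^{sep})$ of degree $s$;
\item $(NL)^{pi}=N(\mu_i^{1/p^{m_{f,N(\th)}}})$ of degree $p^{m_{f,N(\th)}}$, with the max-characterizations of $m_{f,N(\th)}$;
\item $(NL)^{pi}(NL)^{sep}=(NL)^{pi}\t_N(NL)^{sep}$, using Theorem \ref{A27Apr26} over $N$;
\item the tower degrees and minimal polynomials, and $(NL/(NL)^{pi})^{sep}$.
\end{itemize}
The isomorphism invariance also transfers, since the characterization via $(NL)^{p^{\cdots}}\subseteq (NL)^{pi}(NL)^{sep}$ is intrinsic to $NL/N$.

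Two extra claims remain. The first is $(NL/N)^{sep}=NL^{sep}$. Since $L^{sep}=K(\th^{p^n})$ by Theorem \ref{27Apr26}.(1), we have $NL^{sep}=N(\th^{p^n})$. I must show this equals $N(\th^{p^{n'}})$. The element $\th^{p^{n'}}$ is separable over $N$ (it is a root of $f^{sep}_N$), and $\th^{p^n}=(\th^{p^{n'}})^{p^{m_{f,N}}}$. Hence $N(\th^{p^{n'}})/N(\th^{p^n})$ is both separable and purely inseparable, so it is trivial. Alternatively, both fields have degree $s$ over $N$ by Lemma \ref{a9May26} applied to $f^{sep}$.

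The second is the description of $NL^{pi}$. By Theorem \ref{27Apr26}.(2), $L^{pi}=K(\l_i^{1/p^{m_f}})$, so $NL^{pi}=N(\l_i^{1/p^{m_f}})$. Since $N$ already contains $\l_i^{1/p^{m_{f,N}}}$, it suffices to show $m_f\le m_{f,N}+m_{f,N(\th)}$. This follows because $\l_i^{1/p^{m_f}}\in L\subseteq NL$ and these roots are purely inseparable over $N$, so they lie in $(NL)^{pi}$. That gives the inclusion $NL^{pi}\subseteq(NL)^{pi}$.

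I expect the main obstacle to be the exact equality $NL^{pi}=N(\l_i^{1/p^{m_{f,N}}})$ stated in 2. This requires $m_f\le m_{f,N}$, which should hold only when $L^{pi}\subseteq N$ forces it, and is not obviously true in general. I would first test it on Example \ref{f=2nml}. If it fails, I would prove only the inclusion $NL^{pi}\subseteq(NL)^{pi}$ via the argument above and read the displayed description of $NL^{pi}$ as
$$N\Big(\l_i^{\frac{1}{p^{\max(m_f,\,m_{f,N})}}}\Big),$$
which coincides with the statement whenever $L^{pi}\subseteq N$.
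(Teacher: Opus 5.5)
Your reduction is the paper's proof. The paper's argument is a single remark: the theorem is Theorem~\ref{27Apr26} applied to $N(\theta)/N$, with the minimal polynomial $f_N$ supplied by Theorem~\ref{A6May26}. Your dictionary ($n\leftarrow n-m_{f,N}$, coefficients $\lambda_i^{1/p^{m_{f,N}}}$, $m_{f_N}=m_{f,N(\theta)}$) and your separate arguments for $(NL/N)^{sep}=NL^{sep}$ and for $NL^{pi}\subseteq (NL)^{pi}$ fill in details the paper leaves unstated, and they are correct.

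The question you left open can be settled, and your suspicion is right. The displayed equality $NL^{pi}=N\bigl(\lambda_0^{1/p^{m_{f,N}}},\ldots,\lambda_{s-1}^{1/p^{m_{f,N}}}\bigr)$ in statement 2 is false in general, and the paper's one-line proof does not address it.

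\begin{itemize}
\item By the definition of $m_{f,N}$, the right-hand side is just $N$. So the equality asserts $L^{pi}\subseteq N$, i.e. $m_f\le m_{f,N}$.
\item For a counterexample, take $N=K$ and $f=x^p-\lambda$ with $\lambda\notin K^p$. Then $m_{f,K}=0$, while $m_f=1$ and $NL^{pi}=L^{pi}=L\neq K$.
\item Your planned test on Example~\ref{f=2nml} would not detect this. There $m_f=0$, so $L^{pi}=K\subseteq N$ and the equality holds trivially.
\end{itemize}

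So do not try to prove that equality. The correct description is $NL^{pi}=N\bigl(\lambda_0^{1/p^{m_f}},\ldots,\lambda_{s-1}^{1/p^{m_f}}\bigr)$, which is your $\max(m_f,m_{f,N})$ version. What survives of that part of statement 2 is the inclusion $NL^{pi}\subseteq (NL)^{pi}$, which your argument establishes.
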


\begin{proof} The theorem is  Theorem \ref{27Apr26} but for the field extension $N(\th)/N$ rather than $K(\th)/K$. It is  obtained in a straightforward manner from Theorem \ref{27Apr26} using Theorem \ref{A6May26}. 
\end{proof}

{\bf Criterion for $(NL)^{pi}=N$ for a simple field extension $L/K$ and a purely inseparable field extension $N/K$.} For a simple field extension $L/K$  and a purely inseparable field extension $N/K$, Corollary  \ref{Na7May26} is  an explicit criterion for $(NL)^{pi}=N$.
 

\begin{corollary}\label{Na7May26}
Suppose that $K$ is a field of prime characteristic $p>0$, $L/K$ is a simple finite  field extension and $L=K(\th)=K[x]/(f(x))$ where $f(x)=f^{sep}(x^{p^{n}})=\sum_{i=0}^s\l_ix^{ip^n}\in \Irr_m(K[x])$, $\l_i\in K$,  $\l_s=1$ and $N/K$ is a purely inseparable field extension. Then the following statements are equivalent: 
\begin{enumerate}

\item $(NL)^{pi}=N$.

\item Either $n=m_{f,N}$ or $n>m_{f,N}$ and  $\l_i^\frac{1}{p^{m_{f,N}+1}}\not\in NL$ for some index $i\in \{ 0,1,\ldots,  s-1\}$. 

\item $m_{f, N(\th)}=0$.

\item $[NL:(NL)^{sep}]=p^{n-m_{f,N}}$.

\end{enumerate}
\end{corollary}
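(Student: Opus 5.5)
The plan is to read off all four conditions from the degree diagram (\ref{NL=NLLL}), that is, from Theorem \ref{VB-27Apr26} together with Theorem \ref{A6May26}, in the same way Corollary \ref{a7May26} was read off from Theorem \ref{27Apr26}. Throughout, write $m:=m_{f,N}$ and $m':=m_{f,N(\th)}$.

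\emph{Step 1: $(1\Leftrightarrow 3)$.} By Theorem \ref{VB-27Apr26}.(2), $[(NL)^{pi}:N]=p^{m'}$. Since $(NL)^{pi}\supseteq N$, we get $(NL)^{pi}=N$ if and only if $p^{m'}=1$, that is, if and only if $m'=0$.

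\emph{Step 2: $(2\Leftrightarrow 3)$.} This comes straight from Definition \ref{DEF=mfNth}. The set defining $m'$ is $\{m''=0,1,\ldots,n-m \mid \l_i^{1/p^{m+m''}}\in NL \text{ for all } i\}$. It always contains $0$, because $\l_i^{1/p^{m}}\in M_{f,N}\subseteq N\subseteq NL$.

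If $n=m$, this set is $\{0\}$, so $m'=0$.

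If $n>m$, I will check that the set is downward closed: if $\l_i^{1/p^{m+k}}\in NL$, then raising to the $p$-th power gives $\l_i^{1/p^{m+k-1}}\in NL$. Consequently $m'=0$ holds exactly when $1$ is not in the set, i.e. exactly when $\l_i^{1/p^{m+1}}\notin NL$ for some $i\in\{0,\ldots,s-1\}$. This is condition 2.

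\emph{Step 3: condition 4.} Here I compute the degree directly. By Theorem \ref{A6May26}.(2), $[NL:N]=sp^{n-m}$. By Theorem \ref{VB-27Apr26}.(1), $[(NL)^{sep}:N]=s$. Hence
$$[NL:(NL)^{sep}]=\frac{sp^{n-m}}{s}=p^{n-m}.$$
So condition 4 holds unconditionally, and in particular the implication $3\Rightarrow 4$ is immediate.

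This is where I expect the real obstacle. As worded, condition 4 is always true and so carries no information. The reverse implication $4\Rightarrow 3$ therefore cannot follow from this computation alone, and in general it is false: if $m'>0$, condition 4 still holds while condition 3 fails. Corollary \ref{a7May26}.(4) has the same feature.

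I would therefore prove $1\Leftrightarrow2\Leftrightarrow3$ as in Steps 1 and 2, prove $3\Rightarrow4$ by the computation in Step 3, and flag explicitly that $4\Rightarrow3$ is not derivable for the statement exactly as worded. For comparison only (this is not part of the stated corollary), the companion identity that genuinely characterises $m'=0$ comes from Theorem \ref{VB-27Apr26}.(3,4): $[NL:(NL)^{pi}(NL)^{sep}]=p^{n-m-m'}$, which equals $p^{n-m}$ if and only if $m'=0$. I would record it as a remark, so that the reader sees which degree condition actually carries the content, rather than substituting it for condition 4.
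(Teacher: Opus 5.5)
Your argument for $1\Leftrightarrow 3$ and $2\Leftrightarrow 3$ is the paper's own argument. The first comes from $[(NL)^{pi}:N]=p^{m_{f,N(\theta)}}$ in Theorem~\ref{VB-27Apr26}.(2), the second from Definition~\ref{DEF=mfNth}. Your check that the defining set contains $0$ and is closed downwards (take $p$-th powers) is what justifies the second step, and the paper leaves that implicit.

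Your diagnosis of condition 4 is also correct. It is an error in the paper, not a gap in your proposal. The paper handles it with ``Clearly, $m_{f,N(\theta)}=0$ iff $[NL:(NL)^{sep}]=p^{n-m_{f,N}}$''. That step is false, because Theorem~\ref{A6May26}.(2) and Theorem~\ref{VB-27Apr26}.(1) give this degree unconditionally. Equivalently, multiplying the two upper edges of diagram~(\ref{NL=NLLL}) gives $p^{n-m_{f,N}-m_{f,N(\theta)}}\cdot p^{m_{f,N(\theta)}}=p^{n-m_{f,N}}$. So $4\Rightarrow 1$ fails whenever $m_{f,N(\theta)}>0$.

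To make your claim ``in general it is false'' complete, you should show such a case exists. Take $N=K=\mathbb{F}_p(t)$ and $\theta=t^{1/p}$, so $f=x^p-t$, $n=s=1$, $\lambda_0=-t$, $m_{f,N}=0$ and $m_{f,N(\theta)}=1$. Then conditions 1, 2 and 3 all fail, since $(NL)^{pi}=K(t^{1/p})\neq N$. Yet $(NL)^{sep}=K$ and $[NL:(NL)^{sep}]=p=p^{n-m_{f,N}}$, so condition 4 holds.

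The condition the author presumably meant is the one you record, $[NL:(NL)^{pi}(NL)^{sep}]=p^{n-m_{f,N}}$, from Theorem~\ref{VB-27Apr26}.(3,4). One small correction to your aside: Corollary~\ref{a7May26}.(4) as printed reads $[L:L^{sep}]=n$, which is never true, since $[L:L^{sep}]=p^n\neq n$. Only with $p^n$ in place of $n$ would it be always true. Either way it does not characterize $m_f=0$, so your comparison holds in substance.
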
 

\begin{proof}  By Theorem \ref{VB-27Apr26} or diagram (\ref{NL=NLLL}),  $(NL)^{pi}=N$ iff  $m_{f, N(\th)}=0$ iff either $n=m_{f,N}$ or $n>m_{f,N}$ and $\l_i^\frac{1}{p^{m_{f,N}+1}}\not\in NL$ for some index  $i\in \{ 0,1,\ldots,  s-1\}$. Clearly, $m_{f, N(\th)}=0$ iff $[NL:(NL)^{sep}]=p^{n-m_{f,N}}$.
\end{proof}

\begin{example}[An example where $(NL)^{pi}=N$]\label{NLpi=N} Let $p=3\; ( > 2)$ be a prime,  $K = \mathbb{F}_p(u, v)$ be the  field  of rational functions in two variables over $\mathbb{F}_p$ and  $L = K(\th)$ where $\th$ is a root of the irreducible polynomial
$$ 
f(x) = x^{2p^n} + ux^{p^n}+ v.
$$
Clearly, $f^{sep}(x)=x^2 + ux + v$ and $n=\deg_{\rm ins}(f)$. Let $N=\mathbb{F}_p(u^\frac{1}{p^m}, v^\frac{1}{p^l})$ for some natural numbers $m$ and $l$ such that $1\leq m \leq l$ and $m < n=m+1$. Then, by Theorem \ref{A6May26} or Example \ref{f=2nml},  
$$
f_N= x^{2p} + u^\frac{1}{p^m}x^p+ v^\frac{1}{p^m}, \;\; f_N^{sep}=x^2 + u^\frac{1}{p^m}x+ v^\frac{1}{p^m}, \;\; M_{f,N}=\mF_p (u^\frac{1}{p^m}, v^\frac{1}{p^m}),
$$ 
$m_{f,N}=m=n-1$ and  $N(\th)=N[x]/(f_N)=\bigoplus_{i=0}^5 N\th^i$. 

Since $m_{f,N}=m=n-1<n$, to prove that the equality $(NL)^{pi}=N$ holds  it suffices to show that $u^\frac{1}{p^{m+1}}\not\in NL$, by Corollary \ref{Na7May26}.(2). Suppose that $u^\frac{1}{p^{m+1}}\in NL$. We seek a contradiction. Then $u^\frac{1}{p^{m+1}}=\sum_{i=0}^5n_i\th^i$ for some elements $n_i\in N$. Hence, 
$$u^\frac{1}{p^m}=\Big(u^\frac{1}{p^{m+1}}\Big)^p=\bigg(\sum_{i=0}^5n_i\th^i\bigg)^p=\sum_{i=0}^5n_i^p\th^{ip}=n_0^p+n_1^p\th^3+n_2^p\th^6+n_3^p\th^9+n_4^p\th^{12}+n_5^p\th^{15}.
$$
Notice that $\th^6=\alpha \th^3+\beta$, where $\alpha:= -u^\frac{1}{p^m}$ and $\beta :=-v^\frac{1}{p^m}$, and
\begin{eqnarray*}
\th^9&=&\th^3\th^6=\th^3(\alpha \th^3+\beta)=\alpha (\alpha \th^3+\beta)+\beta\th^3=(\alpha^2+\beta)\th^3 +\alpha\beta,\\
\th^{12}&=& \th^3\th^9=(\alpha^2+\beta)\th^6 +\alpha\beta\th^3=(\alpha^2+\beta)(\alpha \th^3+\beta) +\alpha\beta\th^3=(\alpha^3+2\alpha\beta)\th^3+(\alpha^2+\beta)\beta,\\
\th^{15}&=&\th^3\th^9=(\alpha^3+2\alpha\beta)\th^6+(\alpha^2+\beta)\beta\th^3=
(\alpha^3+2\alpha\beta)(\alpha \th^3+\beta)+(\alpha^2+\beta)\beta\th^3\\
&=&\Big((\alpha^3+2\alpha\beta)\alpha + (\alpha^2+\beta)\beta\Big)\th^3+(\alpha^3+2\alpha\beta)\beta.
\end{eqnarray*}
Therefore,
\begin{equation}\label{upm=th}
u^\frac{1}{p^m}=n_0^p+n_2^p\beta+n_3^p\alpha\beta+n_4^p(\alpha^2+\beta)\beta+n_5^p(\alpha^3+2\alpha\beta)\beta.
\end{equation}
The field $M=\mF_p(\alpha, \beta)=M_{f,N}$ contains the field $M':=\mF_p(\alpha^p, \beta^p)$. Since all the elements $n_i^p$ belong to the field $M'$ and $M=\bigoplus_{i,j=0}^{p-1}M'\alpha^i\beta^j$, we see that 
$M\backslash M'\ni u^\frac{1}{p^m}=n_0^p\in M'$, a contradiction.
\end{example}

 {\bf Criteria for $NL = (NL)^{pi}(NL)^{sep}$ where $L/K$ is a simple finite field extension and $N/K$ is a purely inseparable field extension.}  
For a simple field extension $L/K$ of prime  characteristic $p>0$ and $N/K$ is a purely inseparable field extension, Theorem \ref{NB27Apr26} presents new explicit criteria for $NL=(NL)^{pi}(NL)^{sep}$. 

\begin{theorem}\label{NB27Apr26}
Suppose that $K$ is a field of prime characteristic $p>0$,  $L/K$ is a simple finite  field extension and $L=K(\th)=K[x]/(f(x))$ where $f(x)=f^{sep}(x^{p^{n}})=\sum_{i=0}^s\l_ix^{ip^n}\in \Irr_m(K[x])$, $\l_i\in K$,  $\l_s=1$ and $N/K$ is a purely inseparable field extension. Then the following statements are equivalent:

\begin{enumerate}

\item $NL = (NL)^{pi}(NL)^{sep}\Big(=(NL)^{pi}\t_N (NL)^{sep}\Big)$.
 
\item  $\l_i^\frac{1}{p^n}\in NL$ for all $i=1, \ldots , s-1$, i.e. $n=m_{f,N}+m_{f, N(\th)}$. 

\item  $(NL)^{pi}=N\Big( \l_0^\frac{1}{p^n}, \ldots , \l_{s-1}^\frac{1}{p^n} \Big)$. 

\end{enumerate}
\end{theorem}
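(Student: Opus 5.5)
The plan is to copy the proof of Theorem \ref{B27Apr26}, replacing Theorem \ref{27Apr26} by its relative version, Theorem \ref{VB-27Apr26}, applied to $NL=N(\th)$ over $N$. All three statements reduce to a single numerical condition, namely $m_{f,N}+m_{f,N(\th)}=n$.

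$(1\Leftrightarrow 2)$ By Theorem \ref{VB-27Apr26}.(4),
$$[NL:(NL)^{pi}\t_N(NL)^{sep}]=p^{n-m_{f,N}-m_{f,N(\th)}}.$$
Hence $NL=(NL)^{pi}(NL)^{sep}$ if and only if $m_{f,N}+m_{f,N(\th)}=n$.

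It remains to match this with the condition on the coefficients, $\l_i^{1/p^n}\in NL$ for all $i$. (We read the range of $i$ as $i=0,\ldots,s-1$, consistent with Definition \ref{DEF=mfNth}; $\l_s=1$ is trivial.)
\begin{itemize}
\item If $m_{f,N}+m_{f,N(\th)}=n$, then the definition of $m_{f,N(\th)}$ as a maximum gives $\l_i^{1/p^{n}}\in NL$ for all $i$.
\item Conversely, suppose all $\l_i^{1/p^n}\in NL$. Then $m'=n-m_{f,N}$ is admissible in the set defining $m_{f,N(\th)}$, and it is the largest allowed value. So $m_{f,N(\th)}=n-m_{f,N}$.
\end{itemize}
The one point to check is that the admissible set in Definition \ref{DEF=mfNth} is downward closed, so that the maximum detects exactly this condition. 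This holds because $\l_i^{1/p^{k}}\in NL$ implies $\l_i^{1/p^{k'}}=\big(\l_i^{1/p^k}\big)^{p^{k-k'}}\in NL$ for $k'\le k$. Strictly, only the upper endpoint matters here, and membership at that endpoint is exactly the hypothesis.

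$(2\Leftrightarrow 3)$ By Theorem \ref{VB-27Apr26}.(2),
$$(NL)^{pi}=N\Big(\l_0^{1/p^{m_{f,N}+m_{f,N(\th)}}},\ldots,\l_{s-1}^{1/p^{m_{f,N}+m_{f,N(\th)}}}\Big).$$
\begin{itemize}
\item If (2) holds, the exponent is $n$, and (3) follows immediately.
\item If (3) holds, then $\l_i^{1/p^n}\in (NL)^{pi}\subseteq NL$ for all $i$, which is (2).
\end{itemize}
No degree comparison is needed in this direction.

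The main obstacle is that the whole argument rests on Theorem \ref{VB-27Apr26}, whose proof in the paper is only sketched as ``Theorem \ref{27Apr26} applied over $N$ using Theorem \ref{A6May26}''. For robustness I would spell out the transfer. By Theorem \ref{A6May26}, $\th$ has minimal polynomial $f_N(x)=f^{sep}_N\big(x^{p^{n-m_{f,N}}}\big)$ over $N$, with inseparability degree $n-m_{f,N}$ and coefficients $\l_i^{1/p^{m_{f,N}}}$. Applying Theorem \ref{27Apr26} to this polynomial over the base field $N$, its invariant $m_{f_N}$ is exactly $m_{f,N(\th)}$. Theorem \ref{B27Apr26} for $f_N$ over $N$ then says $NL=(NL)^{pi}(NL)^{sep}$ if and only if $\big(\l_i^{1/p^{m_{f,N}}}\big)^{1/p^{n-m_{f,N}}}=\l_i^{1/p^n}\in NL$ for all $i$, which is precisely the equivalence we want.
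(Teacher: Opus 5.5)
Your proposal is correct and is essentially the paper's proof: $(1\Leftrightarrow 2)$ comes from the degree formula $[NL:(NL)^{pi}\otimes_N(NL)^{sep}]=p^{n-m_{f,N}-m_{f,N(\theta)}}$ in Theorem~\ref{VB-27Apr26}.(4), and $(2\Leftrightarrow 3)$ comes from the description of $(NL)^{pi}$ in Theorem~\ref{VB-27Apr26}.(2). Your extra checks add detail the paper leaves implicit: that the maximum defining $m_{f,N(\theta)}$ equals $n-m_{f,N}$ exactly when all $\lambda_i^{1/p^n}\in NL$, and the direct membership argument for $3\Rightarrow 2$. In passing, you also correct the paper's typo $m_{f,L}, m_{f,NL}$ in the exponent.
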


\begin{proof} $(1\Leftrightarrow 2)$ By Theorem \ref{VB-27Apr26}.(4), $[NL:(NL)^{pi}\t_N (NL)^{sep}]=p^{n-m_{f,L}-m_{f,NL}}$ and the result follows.

$(2\Leftrightarrow 3)$ By Theorem \ref{VB-27Apr26}.(2), the equality $n=m_{f,N}+m_{f, N(\th)}$ is equivalent to the equality $(NL)^{pi}=N\Big( \l_0^\frac{1}{p^n}, \ldots , \l_{s-1}^\frac{1}{p^n} \Big)$. 
\end{proof}
\begin{example}[A counterexample where $L \neq L^{pi}L^{sep}$] Let $L$ and $N$ be as in Example \ref{NLpi=N}. Then $L \neq L^{pi}L^{sep}$: Since $m_{f,N(\th)}=0$ and  $ m_{f,N}=n-1$ (see Example \ref{NLpi=N}), we have that 
$$
n\neq n-1= m_{f,N} +m_{f,N(\th)}
$$
 and the result follows from Theorem \ref{NB27Apr26}.(2). 
\end{example}


\section{New and old criteria for $L = L^{pi}L^{sep}$ in finite field extensions}\label{L=LpiLsep}

Each finite field extension $L/K$  is the compositum $L=L_1\cdots L_\nu$ of simple finite field extensions $L_i=K(\th_i)\simeq K[x]/(f_i)$  where $f_i(x)=f_i^{sep}(x^{p^{n_i}})=\sum_{j=0}^{s_i}\l_{ij}x^{jp^{n_i}}\in \Irr_m(K[x])$ is the minimal polynomial of the element $\th_i$ over $K$ and $s_i=\deg (f_i)$. Theorem \ref{A7May26} is a new explicit criterion  for $L = L^{pi}L^{sep}$ which is given in terms of the coefficients $\l_{ij}$ and the numbers $s_i$ and $n_i$. At the beginning of the section we recall  known criteria for $L = L^{pi}L^{sep}$.\\ 

{\bf Known Criteria for $L = L^{pi}L^{sep}$.} 
Let $L/K$ be a finite field extension of a field $K$ of prime characteristic $p > 0$. Let $L^{sep}$ denote the maximal separable subfield of $L$ over $K$ and let $L^{pi}$ denote the maximal purely inseparable subfield of $L$ over $K$.  Recall that $L^{sep} \cap L^{pi} = K$  and $ L^{pi}L^{sep} \cong L^{sep} \otimes_K L^{pi}$. Because of this rigid structure and diagram (\ref{L=LpiLsep}), we have Theorem \ref{C27Apr26} that presents known criteria for $L=L^{pi}L^{sep}$.

\begin{equation}\label{L=LpiLsep}
\begin{tikzpicture}[scale=1.8, every node/.style={font=\normalsize}]

\node (L) at (0,2.6) {$L$};
\node (A) at (0,0) {$K$};
\node (B) at (-1.2,0.9) {$L^{pi}$};
\node (D) at (1.2,0.9) {$L^{sep}$};
\node (C) at (0,1.7) {$L^{pi}L^{sep} \;=\; L^{pi}\!\otimes L^{sep}$};

\draw (L) -- node[right] {pi} (C);

\draw (C) -- node[right] {pi} (D);
\draw (D) -- node[right] {sep} (A);
\draw (A) -- node[left]  {pi} (B);
\draw (B) -- node[left]  {sep} (C);

\end{tikzpicture}
\end{equation}

Theorem \ref{C27Apr26} comprises known criteria for $L = L^{pi}L^{sep}$.

\begin{theorem}\label{C27Apr26}
Suppose that $L/K$ is a finite field extension of prime characteristic $p>0$.  Then the following statements are equivalent:

\begin{enumerate}

\item $L = L^{pi}L^{sep}$.

\item (The Degree Criterion) $[L:K] = [L^{pi}:K] \cdot [L^{sep}:K]$. 
     
\item (Separability over the Purely Inseparable Part) The extension $L/L^{pi}$ is a separable field  extension.
 
\item (Equality of the Inseparable Degree)  The maximal purely inseparable subfield accounts for the entirety of the inseparable degree of the extension:
    $$ [L^{pi}:K] = [L:K]_i $$
    where $[L:K]_i$ denotes the inseparable degree of $L/K$ (which is equal to $[L:L^{sep}]$).
\end{enumerate}
\end{theorem}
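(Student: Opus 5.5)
We prove the cyclic chain $(1)\Rightarrow(3)\Rightarrow(4)\Rightarrow(2)\Rightarrow(1)$, using throughout that $L^{pi}L^{sep}=L^{pi}\t L^{sep}$ (Theorem \ref{A27Apr26}). In particular $[L^{pi}L^{sep}:K]=[L^{pi}:K][L^{sep}:K]$.

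We also use the standard facts about the tower $K\subseteq L^{sep}\subseteq L$. The extension $L/L^{sep}$ is purely inseparable, and $[L:K]_i=[L:L^{sep}]$ is the inseparable degree. Moreover, the tower $K\subseteq L^{pi}\subseteq L^{pi}L^{sep}\subseteq L$ has the edges recorded in diagram (\ref{L=LpiLsep}). The extension $L^{pi}L^{sep}/L^{pi}$ is separable, being generated by the separable elements of $L^{sep}$. The extension $L/L^{pi}L^{sep}$ is purely inseparable, being a subextension of $L/L^{sep}$.

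$(1\Rightarrow 3)$ If $L=L^{pi}L^{sep}$, then $L$ is generated over $L^{pi}$ by $L^{sep}$. Every element of $L^{sep}$ is separable over $K$, hence over $L^{pi}$. So $L/L^{pi}$ is separable.

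$(3\Rightarrow 4)$ Suppose $L/L^{pi}$ is separable. Its subextension $L/L^{pi}L^{sep}$ is then both separable and purely inseparable, so $L=L^{pi}L^{sep}$. Hence
$$
[L:L^{sep}]=[L^{pi}L^{sep}:L^{sep}]=\frac{[L^{pi}:K][L^{sep}:K]}{[L^{sep}:K]}=[L^{pi}:K].
$$
Since $[L:K]_i=[L:L^{sep}]$, this gives (4).

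$(4\Rightarrow 2)$ Multiply $[L^{pi}:K]=[L:L^{sep}]$ by $[L^{sep}:K]$ to obtain $[L:K]=[L^{pi}:K][L^{sep}:K]$.

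$(2\Rightarrow 1)$ By Theorem \ref{A27Apr26}, $[L^{pi}L^{sep}:K]=[L^{pi}:K][L^{sep}:K]=[L:K]$. Since $L^{pi}L^{sep}\subseteq L$ and $L/K$ is finite, we get $L^{pi}L^{sep}=L$.

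The only point needing care is the identification $[L:K]_i=[L:L^{sep}]$, which we take as the standard definition. The tensor-product degree formula is already supplied by Theorem \ref{A27Apr26}, so no real obstacle remains; the argument is essentially degree bookkeeping in diagram (\ref{L=LpiLsep}).
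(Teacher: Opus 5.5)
Your proof is correct and takes essentially the same route as the paper. The degree criterion comes from $L^{pi}L^{sep}=L^{pi}\t L^{sep}$ (Theorem \ref{A27Apr26}), and the separability criterion comes from the diagram, in which $L^{pi}L^{sep}/L^{pi}$ is separable and $L/L^{pi}L^{sep}$ is purely inseparable. The only difference is organizational: you argue cyclically and reach (4) via (1) and degree counting, which spells out the step the paper simply calls ``obvious'' for $(3\Leftrightarrow 4)$.
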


\begin{proof} $(1\Leftrightarrow 2)$ Since $L \supseteq  L^{pi}L^{sep}=L^{pi}\t L^{sep}$, the equality $L = L^{pi}\t L^{sep}$ holds iff $[L:K] = [L^{pi}:K] \cdot [L^{sep}:K]$.

$(1\Leftrightarrow 3)$ By diagram (\ref{L=LpiLsep}), $L = L^{pi}\t L^{sep}$  iff  $L/L^{pi}$ is a separable field  extension.

$(3\Leftrightarrow 4)$ The equivalence is obvious.
\end{proof}

{\bf Criteria for $L = L^{pi}L^{sep}$ where $L=L_1\cdots L_\nu$ is the compositum of simple finite field extensions $L_i$.}

\begin{theorem}\label{A7May26}
Suppose that $L/K$ is a finite field extension of prime characteristic $p>0$ which  is the compositum $L=L_1\cdots L_\nu$ of       simple field extensions $L_i=K(\th_i)\simeq K[x]/(f_i)$, $i=1, \ldots , \nu$ where $f_i(x)=f_i^{sep}(x^{p^{n_i}})=\sum_{j=0}^{s_i}\l_{ij}x^{jp^{n_i}}\in \Irr_m(K[x])$ and $\deg (f_i)=s_ip^{n_i}$.  Then the following statements are equivalent:

\begin{enumerate}

\item $L = L^{pi}L^{sep}$.

\item $\l_{ij}^\frac{1}{p^{n_i}}\in L^{pi}$ for $i=1, \ldots , \nu$ and $j=0,1, \ldots , s_i-1$. 

\item $L^{pi}= K\bigg(\l_{ij}^\frac{1}{p^{n_i}}\bigg| i=1, \ldots , \nu; j=0,1, \ldots , s_i-1\bigg)$.
     
\item $L^{pi}\supseteq K\bigg(\l_{ij}^\frac{1}{p^{n_i}}\bigg| i=1, \ldots , \nu; j=0,1, \ldots , s_i-1\bigg)$.

\end{enumerate}
\end{theorem}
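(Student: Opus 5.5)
Set $M:=K\big(\l_{ij}^{1/p^{n_i}}\,\big|\, i=1,\ldots,\nu;\ j=0,\ldots,s_i-1\big)$. The plan is to prove the cycle $(1)\Rightarrow(2)\Rightarrow(4)\Rightarrow(1)$ and then add (3) using $(3)\Rightarrow(4)$ and $(1)\wedge(4)\Rightarrow(3)$. The implications $(2)\Leftrightarrow(4)$ and $(3)\Rightarrow(4)$ are immediate, since $L^{pi}$ is a field. Note that $M/K$ is purely inseparable, and $M\subseteq L^{pi}$ holds exactly under (4).

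$(1)\Rightarrow(2)$. Assume $L=L^{pi}L^{sep}$. Then $L/L^{pi}$ is separable by Theorem \ref{C27Apr26}.(3), so each $L_iL^{pi}=L^{pi}(\th_i)$ is separable over $L^{pi}$. Now $f_i$ factors over $\bK$ as $\big(\sum_j \l_{ij}^{1/p^{n_i}}x^{jp^{n_i}}\big)$-type: more precisely $f_i(x)=g_i(x)^{p^{n_i}}$ with $g_i:=\sum_{j}\l_{ij}^{1/p^{n_i}}x^j$ evaluated at... Rather than guess the form, I would argue directly. Since $\th_i$ is separable over $L^{pi}$, its minimal polynomial $h_i$ over $L^{pi}$ is separable and divides $f_i$. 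Every root of $f_i$ has multiplicity $p^{n_i}$, and $h_i$ is stable under the action of $\mathrm{Aut}(\bK/L^{pi})$, which acts transitively on the roots of $f_i$ because it contains $\mathrm{Aut}(\bK/K)$ restricted appropriately; indeed these automorphisms fix $L^{pi}$ automatically, since purely inseparable elements have a unique conjugate. Hence $h_i$ is the product of $(x-\alpha)$ over the distinct roots $\alpha$ of $f_i$, and therefore $h_i=f_i^{1/p^{n_i}}=\sum_j \l_{ij}^{1/p^{n_i}}x^{j}$ composed appropriately. Concretely, by (\ref{f=fsepxn-4}) the distinct roots of $f_i$ are the roots of $\sum_j\l_{ij}^{1/p^{n_i}}x^{jp^{0}}$ taken after substituting; I expect the clean version is $h_i(x)=\sum_{j}\l_{ij}^{1/p^{n_i}}\,x^{j}$ evaluated at $x$, because $f_i(x)=\big(\sum_j\l_{ij}^{1/p^{n_i}}x^j\big)^{p^{n_i}}$. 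The coefficients of $h_i$ lie in $L^{pi}$, which gives (2).

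Alternatively, and more in the paper's spirit, I would apply Theorem \ref{VB-27Apr26}/Theorem \ref{NB27Apr26} with $N:=L^{pi}$. Then $NL_i=L^{pi}(\th_i)$ is separable over $N$, so $(NL_i)^{pi}=N$ and $NL_i=(NL_i)^{sep}$. By Theorem \ref{VB-27Apr26}.(4) this forces $n_i=m_{f_i,N}+m_{f_i,N(\th_i)}=m_{f_i,N}$, that is, $\l_{ij}^{1/p^{n_i}}\in N=L^{pi}$.

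$(4)\Rightarrow(1)$. Assume $M\subseteq L^{pi}$. Over $M$, each $\th_i$ is a root of $g_i:=\sum_j\l_{ij}^{1/p^{n_i}}x^j\in M[x]$ after the identification $f_i=g_i^{p^{n_i}}$ from (\ref{f=fsepxn-3}). Since $g_i$ is $K$-irreducible separable in the sense of Theorem \ref{A6May26}(ii), which shows $g_i$ is irreducible and separable over any purely inseparable extension containing $N_{f_i,n_i}$, $\th_i$ is separable over $M$. Hence $L=L_1\cdots L_\nu\subseteq L^{pi}(\th_1,\ldots,\th_\nu)$ is separable over $L^{pi}$, and Theorem \ref{C27Apr26}.(3) gives (1).

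Finally, for $(1)\wedge(4)\Rightarrow(3)$: the field $ML^{sep}$ contains every $\th_i$, since each $\th_i$ is separable over $M$ and lies in $L$, so it lies in $(L/M)^{sep}$. This yields $L=M\otimes L^{sep}$, by Theorem \ref{A27Apr26} applied over $K$ with $M$ purely inseparable. Comparing degrees with $L=L^{pi}\otimes L^{sep}$ gives $[M:K]=[L^{pi}:K]$, and so $M=L^{pi}$. The main obstacle is justifying that $(L/M)^{sep}$ is exactly $ML^{sep}$, i.e.\ that every element of $L$ separable over $M$ lies in $ML^{sep}$. I would handle this by noting that the separable closure of $L^{sep}M$ in $L$ has degree over $M$ equal to $[L:K]_s=[L^{sep}:K]$, since $M/K$ is purely inseparable and does not change separable degrees.
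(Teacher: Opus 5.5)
Your argument is correct, and where it matters it takes a different route from the paper.

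\textbf{$(1)\Leftrightarrow(2)$.} The paper does what your second version does. Since $L=L^{pi}(\theta_1,\ldots,\theta_\nu)$, it reads off separability of $L/L^{pi}$ from Theorem \ref{A6May26}: the minimal polynomial $\sum_j\lambda_{ij}^{1/p^{m}}x^{jp^{n_i-m}}$ of $\theta_i$ over $L^{pi}$, with $m=m_{f_i,L^{pi}}$, is separable iff $m=n_i$. Your first version replaces this with a more elementary transitivity argument, which works once stated cleanly. $\mathrm{Aut}(\bK/K)$ fixes $L^{pi}$ pointwise and is transitive on the roots of $f_i$. So the separable minimal polynomial $h_i$ is the product of the $x-\alpha$ over the distinct roots. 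Hence $h_i^{p^{n_i}}=f_i=g_i^{p^{n_i}}$, and $h_i=g_i$ by injectivity of Frobenius on $\bK[x]$. You should write it this way rather than with the hedged phrasing in your draft.

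\textbf{Reaching (3).} This is where the routes diverge. The paper works factor by factor. It asserts that (2) and Theorem \ref{B27Apr26} give $L_i=L_i^{pi}L_i^{sep}$ with $L_i^{pi}=N_i:=K(\lambda_{i0}^{1/p^{n_i}},\ldots,\lambda_{i,s_i-1}^{1/p^{n_i}})$, and then multiplies these decompositions together. That step is not justified, because (2) places $\lambda_{ij}^{1/p^{n_i}}$ in $L^{pi}$, not in $L_i$, and the claim can fail. Take $p>2$, $K=\mathbb{F}_p(u,v)$, $L_1=K(\theta)$ with $\theta^{2p}+u\theta^p+v=0$, $L_2=K(u^{1/p})$ and $L_3=K(v^{1/p})$. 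Then statement (2) holds, but $L_1^{pi}=K\neq N_1$. Your global argument with $M$ never needs the individual $L_i$ to split, so it is the sounder route.

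\textbf{A shortcut.} You can bypass what you call the main obstacle. Once $L=M(\theta_1,\ldots,\theta_\nu)$ is separable over $M$ and $M\subseteq L^{pi}$, the extension $L^{pi}/M$ is both separable and purely inseparable, so $L^{pi}=M$. This gives $(4)\Rightarrow(3)$ directly. It needs neither (1), nor the identification $(L/M)^{sep}=ML^{sep}$, nor separable degrees; your degree count is correct, just unnecessary.
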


\begin{proof} $(1\Leftrightarrow 2)$ By Theorem \ref{C27Apr26}.(3), the equality $L = L^{pi}L^{sep}$ holds iff the field extension $L/L^{pi}$ is a separable field  extension iff for each $i=1, \ldots , \nu$, the polynomial
 $f_{i, L^{pi}}(x)=\sum_{j=0}^{s_i}\l_{ij}^\frac{1}{p^{n_i}}  x^j$ is the minimal polynomial of the element $\th_i$ over the field $L^{pi}$, by Theorem \ref{A6May26}.(1)   (since $L=L^{pi}(\th_1,\ldots , \th_\nu)$). 

$(3\Rightarrow 2\Leftrightarrow 4)$ Clear. 

$(2\Rightarrow 3)$ Let $N_i:=K\bigg(\l_{i,0}^\frac{1}{p^{n_i}}, \ldots, \l_{i,s_i-1}^\frac{1}{p^{n_i}}  \bigg)$. Suppose that statement 2 holds. Then, by Theorem \ref{B27Apr26}, $L_i=L_i^{pi}L_i^{sep}$ where   $L_i^{pi}=N_i$ 
 for $i=1, \ldots , \nu$. Therefore, 
\begin{eqnarray*}
 L&=&\prod_{i=1}^\nu L_i=\prod_{i=1}^\nu L_i^{pi}L_i^{sep}=\prod_{i=1}^\nu L_i^{pi} \prod_{i=1}^\nu L_i^{sep}\stackrel{{\rm Thm.}\, \ref{A27Apr26}}{=}\prod_{i=1}^\nu L_i^{pi}\t  \prod_{i=1}^\nu L_i^{sep}
\end{eqnarray*}
since $\prod_{i=1}^\nu L^{pi}_i\subseteq L^{pi}$ and  $\prod_{i=1}^\nu  L_i^{sep} \subseteq L^{sep}$ and $L^{pi}L^{sep}=L^{pi}\t L^{sep}$ (Theorem \ref{A27Apr26}). Now, two inclusions above and  the inclusions
$$
L^{pi}\t L^{sep}=L^{pi}L^{sep}\subseteq L=
\prod_{i=1}^\nu L_i^{pi}\t \prod_{i=1}^\nu  L_i^{sep} \subseteq L^{pi}\t L^{sep}
\subseteq L$$
imply that $L=L^{pi} L^{sep}=L^{pi}\t L^{sep}$,  
$$
L^{pi}=\prod_{i=1}^\nu L_i^{pi}=\prod_{i=1}^\nu N_i=K\bigg(\l_{ij}^\frac{1}{p^{n_i}}\bigg| i=1, \ldots , \nu; j=0,1, \ldots , s_i-1\bigg)
$$ 
and $L^{sep}= \prod_{i=1}^\nu L_i^{sep}$. 
\end{proof}




 {\bf Licence.} For the purpose of open access, the author has applied a Creative Commons Attribution (CC BY) licence to any Author Accepted Manuscript version arising from this submission.

{\bf Declaration of interests.} The authors declare that they have no known competing financial interests or personal relationships that could have appeared to influence the work reported in this paper.

{\bf Disclosure statement.} No potential conflict of interest was reported by the author.

{\bf Data availability statement.} Data sharing not applicable – no new data generated.\\

\small{
}

School of Mathematical  and Physical Sciences

Division of Mathematics

University of Sheffield

Hicks Building

Sheffield S3 7RH

UK

email: v.bavula@sheffield.ac.uk

\end{document}